\newtheorem{theorem}{Theorem}[section]
\newtheorem{corollary}[theorem]{Corollary}
\newtheorem{lemma}[theorem]{Lemma}
\theoremstyle{definition}
\newtheorem{example}[theorem]{Example} 
\theoremstyle{remark}
\newtheorem{remark}[theorem]{Remark} 
\numberwithin{equation}{section}
\begin{document}

\title[On the completeness of the root functions]
      {On the completeness of the root functions of the Sturm-Liouville problems for the Lam\'e
system in weighted spaces\footnote{This is a preprint version of the paper published in 
Z. Angew. Math. Mech., V. 95, N. 11 (2015), 1202--1214.  DOI 10.1002/zamm.201300303.}}


\author{A. Shlapunov}
\address[Alexander Shlapunov]
        {Siberian Federal University
\\
         Institute of Mathematics
\\
         pr. Svobodnyi 79
\\
         660041 Krasnoyarsk
\\
         Russia}
\email{ashlapunov@sfu-kras.ru}


\author{A. Peicheva}
\address[Anastasiya Peicheva]
        {Siberian Federal University
\\
         Institute of Mathematics
\\
         pr. Svobodnyi 79
\\
         660041 Krasnoyarsk
\\
         Russia}
				
\email{peichevaas@mail.ru}


\subjclass[2010]{35J48, 47A75}

\keywords{Sturm-Liouville problem, non-coercive problems, the Lam\'e
system, root functions}

\begin{abstract}
We consider three Sturm--Liouville boundary value problems (the coercive ones and
the non-coercive one) in a  bounded Lipschitz domain for the perturbed Lam\'e operator with the
boundary conditions of Robin type.  We prove that the problems are Fredholm ones in proper
weighted Sobolev type spaces. The conditions, providing the completeness of the root functions
related to the boundary value problem, are described.
\end{abstract}

\maketitle


\tableofcontents

\section{Introduction}

Investigating a boundary value problem, it is important to know both solvability
conditions and formulas for its exact and approximate solutions. For the linear problems,
the latter ones can be obtain with the use of expansions over the (generalized) eigenfunctions
related to the them (see, for instance, \cite{GokhKrei69}). Then, to use numerical methods
in the non-selfadjoint case, one needs to prove the completeness of the system of the
corresponding root elements. The results
of this kind are well known for the coercive (elliptic) boundary problems over smooth domains
(see \cite{Agmo62}, \cite{Brow59b}, \cite{Keld51}). For the Spectral Theory
related to the elliptic problems in Lipschitz domains we refer to the survey \cite{Agra11a}.
The root elements of general elliptic problems in weighted Sobolev spaces over domains
with the conic and edge singularities were studied in \cite{EgorKondSchu01}, \cite{Kond99},  
\cite{Tark06}.

Non-coercive boundary value problems for elliptic differential operators were discovered in the
middle of XX-th century (see, \cite{ADN59}, \cite{KN65}). 
In the Elasticity Theory, the problems of this kind  were indicated in
\cite{Ca59}, \cite{Ca60}. Considering the non-coercive problems, we essentially enlarge the class
of boundary conditions for which the completeness of the root elements holds true. This may lead
to a loss of the regularity of solutions to the problem near the boundary, but this is motivated
by the very nature of the problems (cf. \cite{PolkShla13}, \cite{TarShla13a}). 

The aim of this paper is the proof of the completeness in weighted Sobolev type spaces of the
root elements of three Sturm-Liouville problems  for the perturbed Lam\'e operator with the
boundary conditions of Robin type. The use of the  weighted spaces allows us to choose the
solutions with prescribed asymptotic behavior near the singular points of the boundary.

\section{Function spaces}
\label{required}
\label{s.3}

Let $D$ be a bounded domain in the Euclidean space $\mathbb{R}^{m}$, $m\geq 2$, with a
Lipschitz boundary. We consider complex-valued functions defined over the domain $D$ and its
closure $\overline{D}$. For $s \in {\mathbb Z}_+$  and $M \subset \overline D$, denote
by $C^s (\overline D, M)$ the set of $s$ times continuously differentiable functions over
$\overline D$ vanishing in a neighborhood of $\overline M$.  Let $C^\infty_{0} (D)$ be the space
of smooth functions with compact supports in $D$. The H\"older class with the exponent $0<\alpha
\leq 1$ over the set $M \subset {\mathbb R}^m$ we denote by $C^{0,\alpha} (M)$.
We write $L^q (D)$ for the standard Lebesgue space ($1\leq q \leq + \infty$).
We also write $H^s (D)$, $s \in \mathbb N$, for the Sobolev space of functions with all the weak
derivatives up to order $s$ belonging to $L^2 (D)$. Let $H^s_0 (D)$ stand for the closure of
$C^\infty_{0} (D)$ in $H^s (D)$. For positive non-integer $s$ we denote by $H^s (D)$ the
Sobolev-Slobodetskii space, see, for instance, \cite{LiMa72}.

Considering the spaces with negative smoothness we use the following standard construction. Let
$H^+$ and $H^0$ be complex Hilbert spaces with inner products $(\cdot, \cdot)_+$ and $(\cdot,
\cdot)_0$ respectively. Assume that $H^+$ is embedded continuously into $H^0$ and denote by $J_0:
H^+ \to H^0$ the corresponding  embedding. Moreover, we assume that $H^+$ is dense in $H^0$.
Then  let $H  ^{-} $ stand for the completion of $H^+$ with respect to the norm
$ \| u \|_{-} = \sup_{\substack{v \in H^+ \\ v \ne 0}}
\frac{|(v,u)_{0}|}{\| v \|_{+}}$.

The following lemma is well known (see, for instance, \cite[\S 3]{Sche60}).

\begin{lemma}
\label{l.dual}
The Banach space $H^{-} $ is topologically isomorphic to the dual space $(H^{+}) '$. Besides, the
isomorphism is defined by the Hermitian form $\langle v,u \rangle = \lim_{\nu \to \infty}
(v,u_\nu)_{0}$, $u \in H^{-}$, $v \in H^{+}$, where  $\{ u_\nu \}$ is a sequence in $H^+$
converging to $u$ in $H^{-}$. Moreover, if the embedding $J_0: H^+\to H^-$ is compact then the
space $H^{0}$ is compactly embedded to $H^{-}$.
\end{lemma}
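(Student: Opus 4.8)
The plan is to realize the pairing explicitly on the dense subspace $H^+$ and then pass to the completion. For $u\in H^+$ consider the linear functional $\ell_u\colon v\mapsto(v,u)_0$ on $H^+$. Continuity of $J_0$ gives $|(v,u)_0|\le\|v\|_0\|u\|_0\le c\,\|u\|_0\|v\|_+$, so $\ell_u\in(H^+)'$, and by the very definition of $\|\cdot\|_-$ the operator norm of $\ell_u$ equals $\|u\|_-$. (In particular $\|\cdot\|_-$ really is a norm on $H^+$: if $\|u\|_-=0$ then $(u,u)_0=0$, whence $u=0$ because $J_0$ is injective.) Thus $u\mapsto\ell_u$ is an isometry of $(H^+,\|\cdot\|_-)$ into the Banach space $(H^+)'$; it is linear or conjugate-linear depending on the inner-product convention adopted, which is immaterial here.

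Second, I would check that the range $\{\ell_u:u\in H^+\}$ is dense in $(H^+)'$. Since $(H^+)'$ is itself a Hilbert space, it is enough to show that an element orthogonal to every $\ell_u$ vanishes; under the canonical identification $((H^+)')'=H^+$ this says that $w\in H^+$ with $(w,u)_0=0$ for all $u\in H^+$ must be $0$, which follows by taking $u=w$ and using the injectivity of $J_0$. (Equivalently, represent $f\in(H^+)'$ as $f=(\cdot,w)_+$ by the Riesz theorem and note that $J_0^*(H^+)$ is dense in $H^+$.)

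As $H^-$ is, by definition, the completion of $(H^+,\|\cdot\|_-)$ and $(H^+)'$ is complete, the isometry extends uniquely to an isometric (hence topological) isomorphism $\Phi\colon H^-\to(H^+)'$. For $u\in H^-$ pick $u_\nu\in H^+$ with $u_\nu\to u$ in $H^-$; then $\ell_{u_\nu}\to\Phi(u)$ in $(H^+)'$, so in particular $(v,u_\nu)_0=\ell_{u_\nu}(v)\to\Phi(u)(v)$ for each fixed $v\in H^+$. Hence the limit $\langle v,u\rangle:=\lim_{\nu\to\infty}(v,u_\nu)_0=\Phi(u)(v)$ exists, does not depend on the approximating sequence, and realizes the duality between $H^+$ and $H^-$.

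For the last assertion, note that the estimate above also gives $\|\ell_u\|_{(H^+)'}=\|u\|_-\le c\,\|u\|_0$, so $u\mapsto\ell_u$ extends to a bounded, injective map $j\colon H^0\to H^-\cong(H^+)'$ (injectivity because $H^+$ is dense in $H^0$). Writing $R\colon H^+\to(H^+)'$ for the Riesz isomorphism and $J_0^*\colon H^0\to H^+$ for the Hilbert-space adjoint of $J_0$, the identity $(v,u)_0=(v,J_0^*u)_+$ gives $\Phi\circ j=R\circ J_0^*$. If $J_0$ is compact, then so is its adjoint $J_0^*$, hence $\Phi\circ j$, and therefore $j=\Phi^{-1}\circ R\circ J_0^*$, are compact; i.e., $H^0$ is compactly embedded into $H^-$. (Reading the stated hypothesis literally as compactness of the embedding $H^+\hookrightarrow H^-$ gives the same condition, since that embedding equals $\Phi^{-1}R(J_0^*J_0)$.) The only step requiring real care is the density of $\{\ell_u\}$ in $(H^+)'$ — exactly where injectivity of $J_0$ enters; the rest is the routine machinery of completions, the Riesz representation, and adjoints.
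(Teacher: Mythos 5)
Your proof is correct and complete; the paper itself gives no argument for this lemma, merely citing it as well known from Schechter's \emph{Negative norms and boundary problems}, and your construction (the isometry $u\mapsto\ell_u$ onto a dense subspace of $(H^{+})'$, extension to the completion, and the factorization $\Phi\circ j=R\circ J_0^{*}$ for the compactness claim) is exactly the standard argument that citation points to. The only points worth flagging are ones you already handle: the identification is conjugate-linear, consistent with the ``Hermitian form'' phrasing, and the hypothesis ``$J_0:H^{+}\to H^{-}$ compact'' should be read as compactness of $H^{+}\hookrightarrow H^{0}$, which you correctly observe is equivalent.
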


Thus, $H^{-s} (D)$, $s>0$, corresponds to $H^{-}$ if $H^{0}=L^2 (D)$, $H^+=H^s(D)$. If
$H^{0}=L^2 (D)$, $H^+=H^s_0(D)$ the space $H^{-}$ will be denoted by $\tilde H^{-s} (D)$.

The weighted spaces appears naturally during the investigation of mixed boundary problems because
the weight can be used to control the behavior of the solutions near the set where the boundary
conditions change the character. Choose a closed set $Y $ on $\partial D$. In
order to control the growth of functions near $Y$ we introduce the weighted spaces associated to
$Y$. Assume that  $\rho \in C (\overline D)$ is a $C^1$-smooth function over $\overline D
\setminus Y$ such that  $0\leq \rho (x)\leq 1$, $x \in \overline D$, $\frac{\partial
\rho}{\partial x_j} \in L^\infty (D)$, $1\leq j \leq m$ and  $\rho (x)=0$ if and only if $x \in
Y$. In particular, $\rho\equiv 1$ will correspond to the usual Sobolev spaces. If $Y\ne
\emptyset$, then in typical situations,  for domains with piece-wise smooth boundaries, the
function $\rho (x)$ is the distance form the point $x \in \overline D$ to the singular set $Y
\subset \partial D$.

Now, for $\gamma \in {\mathbb R}$, $s=0$ and $s=1$ we introduce the weighted Sobolev spaces
$H^{s,\gamma} (D)$  as the completion of $C^s (\overline D, Y)$ with respect to the norms,
induced by the following scalar products:
\[
( u,v)_{H^{s,\gamma}   (D)} = \sum_{|\alpha|\leq s}    \Big(\rho^{|\alpha|-\gamma-s}\partial
^\alpha u ,\rho^{|\alpha|-\gamma-s}\partial ^\alpha v\Big)_{L^{2} (D)} , s=0,1,
\]
(cf. \cite[\S 1.7]{BoKo} for the localized situation where the
weight is given in local coordinates near the singularity). Moreover, for $0<s<1$ we introduce
the weighted Sobolev-Slobodetskii spaces as the completion of  $C^{1} (\overline D, Y)$  with
respect to the norms, induced by the following scalar product:
\[
( u,v)_{H^{s,\gamma}   (D)} =  ( u,v)_{H^{0,\gamma+s}   (D)}+ (\rho^{-\gamma} u,
\rho^{-\gamma}v)_{H^{s} (D)}.
\]
Similar fractional  weighted spaces were considered in \cite{Kond66} for the
localized situation.

As before, the weighted negative Sobolev-Slobodetskii space $H^{-s,\gamma} (D)$, $0<s\leq 1$,
will be defined as the space $H^{-}$ for $H^{0}=H^{0,\gamma} (D)$, $H^+=H^{s,\gamma} (D)$.

\begin{lemma} \label{eq.l.emb.s.gamma}
For each fixed $\gamma \in {\mathbb R}$ the space $H^{s,\gamma} (D)$ is compactly embedded into
$H^{s',\gamma} (D)$, if $-1 \leq s'<s\leq 1$. Moreover, if $1/2 <s \leq 1$ the trace operator
$tr: H^{s,\gamma} (D) \to H^{s-1/2,\gamma} (\partial D)$ is correctly defined and bounded.
\end{lemma}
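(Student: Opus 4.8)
The plan is to reduce both assertions to the corresponding unweighted statements by means of the "unweighting" substitution $u \mapsto \rho^{-\gamma} u$, together with the compactness machinery already available from Lemma~\ref{l.dual}. First I would treat the compact embedding. By the very definitions, multiplication by $\rho^{-\gamma}$ is (after passing to completions) an isometric isomorphism $H^{0,\gamma}(D) \to L^2(D)$, and for $0<s\le 1$ it carries $H^{s,\gamma}(D)$ continuously into a space comparable to the unweighted $H^s(D)$ plus a lower-order weighted term — indeed for $0<s<1$ the second summand in the definition of $(\cdot,\cdot)_{H^{s,\gamma}}$ is literally $\|\rho^{-\gamma}u\|_{H^s(D)}^2$. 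For the integer cases $s=0,1$ one checks, using the Leibniz rule and the hypotheses $0\le\rho\le1$, $\partial\rho/\partial x_j\in L^\infty(D)$, that $\|\rho^{-\gamma}u\|_{H^s(D)}$ and $\|u\|_{H^{s,\gamma}(D)}$ are equivalent up to lower-order terms controlled by $\|u\|_{H^{0,\gamma}(D)}$. Hence it suffices to establish the chain of compact embeddings for the scale $\{H^{s,\gamma}(D)\}$ with $-1\le s'<s\le 1$; I would do this in the three ranges $0\le s'<s\le 1$, then $s'=0>s'$ i.e. $-1\le s'<0\le s$, and finally $-1\le s'<s<0$, since passing to the negative part is handled by Lemma~\ref{l.dual}.

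For $0\le s'<s\le 1$: the map $\rho^{-\gamma}\cdot$ identifies the embedding $H^{s,\gamma}(D)\hookrightarrow H^{s',\gamma}(D)$ with an operator that, modulo the isometries above, factors through the classical compact embedding $H^s(D)\hookrightarrow H^{s'}(D)$ for a bounded Lipschitz domain (Rellich–Kondrachov, valid for Sobolev–Slobodetskii spaces on Lipschitz domains); the lower-order corrections are themselves compact since they land in $H^{0,\gamma}(D)=L^2(D)$ after unweighting and are dominated by the already-compact part. Thus $J_0\colon H^{s,\gamma}(D)\to H^{s',\gamma}(D)$ is compact in this range. In particular, taking $s'=0$, $H^{s,\gamma}(D)$ embeds compactly into $H^{0,\gamma}(D)$. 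Now invoke Lemma~\ref{l.dual} with $H^+=H^{s,\gamma}(D)$, $H^0=H^{0,\gamma}(D)$: since that compact embedding gives $J_0$ compact, the lemma yields that $H^{0,\gamma}(D)$ is compactly embedded into $H^{-s,\gamma}(D)$, and composing a bounded map with a compact one on either side propagates compactness to all the embeddings $H^{s,\gamma}(D)\hookrightarrow H^{s',\gamma}(D)$ with $s>s'$ across $0$, and likewise, by duality of the embeddings $H^{s,\gamma}\hookrightarrow H^{s',\gamma}$ for $0\le s'<s\le1$ (whose adjoints are the embeddings $H^{-s',\gamma}\hookrightarrow H^{-s,\gamma}$), to the range $-1\le s'<s<0$. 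This dualization step — checking that the adjoint of a dense compact embedding in this weighted scale is again the expected embedding, with the pairing from Lemma~\ref{l.dual} — is the point requiring the most care.

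For the trace assertion when $1/2<s\le1$: again unweight. The operator $u\mapsto tr(u)$ should be defined so that $\rho^{-\gamma}\,tr(u)=tr(\rho^{-\gamma}u)$ on $\partial D\setminus Y$, i.e. $tr\colon H^{s,\gamma}(D)\to H^{s-1/2,\gamma}(\partial D)$ is conjugate, via multiplication by $\rho^{\gamma}$ on the boundary, to the classical trace $H^s(D)\to H^{s-1/2}(\partial D)$, which is bounded for Lipschitz domains precisely when $s>1/2$ (so that $s-1/2>0$ and the Slobodetskii space on $\partial D$ makes sense). Concretely: for $u\in C^1(\overline D,Y)$ one has $\rho^{-\gamma}u\in H^s(D)$ with norm $\lesssim\|u\|_{H^{s,\gamma}(D)}$ by the equivalence established above, so $tr(\rho^{-\gamma}u)\in H^{s-1/2}(\partial D)$ with controlled norm; since $u$ vanishes near $Y$, $tr(u)=\rho^{\gamma}\,tr(\rho^{-\gamma}u)$ is well defined, and $\|tr(u)\|_{H^{s-1/2,\gamma}(\partial D)}=\|\rho^{-\gamma}tr(u)\|_{H^{s-1/2}(\partial D)}+\dots=\|tr(\rho^{-\gamma}u)\|_{H^{s-1/2}(\partial D)}+\dots\lesssim\|u\|_{H^{s,\gamma}(D)}$. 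Passing to the completion extends $tr$ to all of $H^{s,\gamma}(D)$ with the stated bound. The main obstacle throughout is not any single deep inequality but the bookkeeping that the weight $\rho$, which is only $C^1$ off $Y$ with merely bounded first derivatives, does not destroy the equivalences when commuted past one derivative — this is where $\partial\rho/\partial x_j\in L^\infty(D)$ and $0\le\rho\le1$ are used — together with the duality identification in the negative range.
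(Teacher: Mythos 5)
Your argument is correct and is precisely the route the paper takes: its proof consists of the single remark that the result is ``standard,'' resting on the Rellich--Kondrashov Theorem and the Trace Theorem for the unweighted Sobolev spaces (citing \cite[\S 1.7]{BoKo}), which is exactly the reduction via $u\mapsto\rho^{-\gamma}u$ and the duality of Lemma~\ref{l.dual} that you carry out in detail. The only caveat is that your phrase ``equivalent up to lower-order terms'' should be read as the one-sided bound $\|\rho^{-\gamma}u\|_{H^1(D)}\lesssim\|u\|_{H^{1,\gamma}(D)}$ (the commutator term is controlled by the zeroth-order piece $\|\rho^{-\gamma-1}u\|_{L^2(D)}$ of the weighted norm, and a genuine converse would need a Hardy-type inequality), but that one-sided bound is all your argument actually uses.
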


\begin{proof} The proof is standard. It is based on the Rellich-Kondrashov Theorem and the Trace
Theorem for the usual Sobolev spaces (see \cite[\S 1.7]{BoKo}). 
\end{proof}

Everywhere below, for a set $M \subset \overline D$ we denote by $H^{s,\gamma} (D,M)$
the completion of $C^{s} (\overline D,M \cup Y)$ in  $H^{s,\gamma} (D)$. In particular,
$H^{1} (D,\partial D)= H^{1} _0(D)$.

Besides, for a function space ${\mathfrak B}(D)$ over $D$ denote by $[{\mathfrak B }(D)]^k$ the
space of $k$-vector functions $u$ with the components  $u_j \in {\mathfrak B}(D)$. If ${\mathfrak
B}(D)$ is a normed space then we endow the space $[{\mathfrak B }(D)]^k$ with the norm
$\|u\|_{[{\mathfrak B }(D)]^k} = \Big(\sum_{j=1}^k \|u_j\|^2_{{\mathfrak B }(D)}\Big)^{1/2}$.
Thus, $[{\mathfrak B }(D)]^k$ is a Hilbert space if  the space ${\mathfrak B }(D)$ is a Hilbert
one.

\section{The Sturm Liouville-problem for the Lam\'e type system}
\label{s.4}

Fix an open connected set $S$ with piece-wise smooth boundary  $\partial S$ on the hypersurface
$\partial D$, a set $Y\subset \partial S$ and a weight $\rho$ associated with them. Denote by
${\mathcal L}$ the Lam\'e type operator in ${\mathbb R}^m$:
\begin{equation} \label{eq.Lame}
{\mathcal L}_0 (x,\partial)  = -  \mu (x) I_m \Delta_m  - (\lambda (x) +
\mu (x) ) \nabla_m \mbox{div}_m
\end{equation}
where $I_m$ is the identity $(m\times m)$-matrix, $\Delta_m$ is the Laplace operator in
${\mathbb R}^m$, $\nabla_m$ is the gradient operator in ${\mathbb R}^m$, $\mbox{div}_m $ is the
divergence operator in ${\mathbb R}^m$,
and  $\mu$, $\lambda$ are real-valued functions from $L^\infty (D)$ such that $\mu \geq \kappa $,
$(2\mu +\lambda) \geq \kappa$ for a constant $\kappa>0$. If $m=3$ and $\mu \geq 0$, $\lambda
\geq 0$  this operator plays an essential role in the description of the displacement of an
elastic body under the load (see \cite{Fi72}). It also can be used as a linearization of the
stationary version of the Navier-Stokes' type equations for viscous compressible fluid if the
pressure is known (see \cite[\S 15]{LaLi59}).

Clearly, the Lam\'e type operator is strongly elliptic and, if the functions $\mu$, $\lambda$
belong to $C^{0,1} (D)$ then  there is a formally non-negative self-adjoint
operator ${\mathcal L}_{\mathfrak D} (x,\partial) ={\mathfrak D}^*  {\mathfrak D} $ that differs
from ${\mathcal L}_0 (x,\partial)$ by the low order summands; here ${\mathfrak D} =
\sum_{j=1}^m {\mathfrak D}_j \partial _j   $ is a differential $(k\times m)$-matrix
first order operator and ${\mathfrak D}^*$ is its formal adjoint one. Of course, there are many
such operators ${\mathfrak D} $. To introduce three of them we denote by
 $M_1 \otimes M_2$ the Kronecker product of matrices $M_1$ and $M_2$,
 by $\mbox{rot}_m$  we denote  $\Big(\frac{(m^2-m)}{2}\times
m\Big)$-matrix operator with the lines   $\vec{e}_i \frac{\partial}{\partial x_j}-\vec{e}_j
\frac{\partial}{\partial x_i }$,  $1\leq i<j \leq m$,  representing the vorticity (or the
standard rotation operator for $m=2$, $m=3$), and by  ${\mathbb D}_m$ we denote
$\Big(\frac{(m^2+m)}{2}\times m\Big)$-matrix operator with the lines  $\sqrt{2} \vec{e}_i
\frac{\partial}{\partial x_i}$, $1\leq i \leq m$, and $\vec{e}_i \frac{\partial}{\partial
x_j}+\vec{e}_j \frac{\partial}{\partial x_i }$  with $1\leq i<j \leq m$,
representing the deformation (the strain). The we set:
\begin{equation} \label{eq.factor.1}
{\mathfrak D}^{(1)} =
\left(\begin{array}{lll} \sqrt{\mu } \ {\mathbb D}_m \\ 
\sqrt{\lambda } \mbox{div}_m  \\ \end{array} \right), \quad
{\mathfrak D}^{(2)}  = \left(\begin{array}{lll} \sqrt{\mu } \ \nabla_m \otimes I_m \\  \sqrt{\mu
+\lambda } \ \mbox{div}_m , \end{array} \right), \quad {\mathfrak D}^{(3)} =
\left(\begin{array}{lll} \sqrt{\mu } \ \mbox{rot}_m \\ \sqrt{2\mu +\lambda } \mbox{div}_m  \\
\end{array} \right),
\end{equation}
here $\lambda \geq 0$,  $k_1=(m^2+m)/2 + 1$ for the first operator,   $(\mu+\lambda)\geq 0$,
$k_2=m^2+1$  for the second operator, and $(2\mu+\lambda)\geq \kappa >0$,
$k_3=(m^2-m)/2 + 1$  for the third operator.

Thus, everywhere below we assume that  $\mu, \lambda \in C^{0,1 } (D) \cap L^{\infty} (D)$,
$\rho \nabla_m \mu , \rho \nabla_m \lambda \in [L^\infty (D)]^m$.

Consider a $(m\times m)$-matrix linear differential operator $A$ in the domain $D$ associated
with the operator ${\mathcal L}_{\mathfrak D}$, where $\mathfrak D$ is one
of the operators ${\mathfrak D}^{(j)}$, $j=1,2,3$:
\begin{equation} \label{eq.A.D}
A  u = {\mathfrak D}^* {\mathfrak D} u  +   a_1 \nabla_m \otimes I_m
+ a_0 (x) u,
\end{equation}
here $a_0$ and $a_1$ are functional $(m\times m)$- and $(m\times m^2)$- matrices
respectively with the components $a^{(p,q)}_j$ satisfying the following assumptions:
$\rho^2 a^{(p,q)}_0 \in L^{\infty} (D)$, $\rho a^{(p,q)}_1 \in L^{\infty} (D)$.

Let $\nu_ {\mathfrak D} =  \sum_{j = 1}^m {\mathfrak D}^*_j \nu_j
{\mathfrak D} $ be the conormal derivative with respect to the operator ${\mathfrak D}$, where
$\nu  = (\nu_1 , \dots \nu_{m} )$ is the field of the exterior unit normal vectors with respect
to $\partial D$ (defined for almost all points $x\in \partial D$).  Clearly, two operators
of the type $\nu_ {\mathfrak D}$ above, differ on a matrix with entries being
tangential derivatives with respect to the boundary.

Consider now the boundary operator
\begin{equation*} \label{eq.B}
B  = b_1 (x)\nu_ {\mathfrak D} + b_0 (x) + \partial _\tau
\end{equation*}
where $\partial_\tau $ is a  $(m\times m)$-matrix of tangential derivatives with
respect to $\partial D$.  As for the $(m\times m)$-matrices $b_0 (x)$
and $b_1 (x)$, we will assume  that their components are locally bounded functions on $\partial
D\setminus Y$.  We allow for the matrix $b_1 (x)$ to degenerate on $S$; in
this case we assume that $b_0 (x)$ is not degenerate on $S$ and the components of the tangential
part $\partial_\tau$ equal to zero on $S$.

\begin{remark} \label{r.stress.op} Usually, the first order boundary conditions related to
boundary problems for the Lam\'e operator are defined with the use of the stress boundary tensor
$\sigma$ with the components
\begin{equation} \label{eq.stress.tensor}
\sigma _{i,j}   =    \mu  \, \delta_ {i,j} \sum_{k=1}^m \nu_k \frac{\partial }{\partial x_k}  +
\mu \, \nu_j \frac{\partial }{\partial x_i}  + \lambda \, \nu_i \frac{\partial }{\partial x_j}  ,
\ 1\leq i,j \leq m.
\end{equation}
Then, with the tangential part $\partial_{\tau_0} = \left( (\nu(x) \rm{div}_m )^T -  \nu (x)
\rm{div}_m \right)$, we have
\begin{equation} \label{eq.stress.var}
\sigma   = \nu_{{\mathfrak D}^{(1)}} = \nu_{{\mathfrak D}^{(2)}} +  \mu (x) \partial_{\tau_0} =
\nu_{{\mathfrak D}^{(3)}}    + 2\mu (x) \partial_{\tau_0} .
\end{equation}
\end{remark}

We will study the following mixed problem:  given generalized $m$-vector function
$f$ in $D$, find a $m$-vector distribution  $u$ in $D$ satisfying in a proper sense 
(cf. \cite[\S 12]{Fi72} for ${\mathfrak D} = {\mathfrak D}^{(1)}$)
\begin{equation}
\label{eq.SL}
   \left\{
\begin{array}{rclcl}
     A u   & =  & f    & \mbox{ in }  & D, \\
     B u   & =   & 0  & \mbox{ on }  & \partial D.
  \end{array}   \right.
\end{equation}
If $S=\partial D$ then we obtain the classical Dirichlet problem for the strongly elliptic
operator ${\mathfrak D}^*{\mathfrak D}$. As it is well known, it is coercive due to the
G\aa{}rding inequality (see, for instance, \cite{Fi72}, \cite{LadyUral73}, \cite{LiMa72}).
That is why we will be concentrated on the case where $S\ne\partial D$.

The boundary problem (\ref{eq.SL}) related to ${\mathfrak D}= {\mathfrak D}^{(3)}$
was discovered by S.~Campanato (see \cite{Ca59}, \cite{Ca60}). However he proved
an Existence Theorem for it in the coercive case $S=\partial D$ only.

In the classical Theory of Boundary Value Problems, a typical assumptions
are the fulfillment of the Shapiro-Lopatinsky conditions for the pair
$(A,B)$ on the smooth part of $\partial D $ (see, for instance, \cite{Eski73}, 
\cite{LadyUral73}, \cite{LiMa72},  and others), that is a necessary
for the problem to be coercive. We will show below that for $S\ne\partial D$
and ${\mathfrak D}= {\mathfrak D}^{(1)}$ or ${\mathfrak D}= {\mathfrak D}^{(2)}$ the mixed
problem (\ref{eq.SL}) is coercive in the Sobolev spaces, but for  $S\ne\partial D$ and
${\mathfrak D} = {\mathfrak D}^{(3)}$ it is not (cf. \cite{Ca59} for $n=2$).

As we plan to use the perturbation method for compact self-adjoint operators,
we split the coefficients $a_0$ and $b_0$:
\[
a_0 =a_{0,0} + \delta a_0,  \, b_0  = b_{0,0} +\delta b_0,
\]
where $a_{0,0} (x)$ is a Hermitian non-negative functional $(m\times m)$-matrix over $D$ with
the components satisfying  $\rho^2 a^{(p,q)}_{0,0} \in L^{\infty} (D)$, and where $(m\times
m)$-matrix  $b_{0,0}$ is chosen in such a way that  $b_1 ^{-1} b_{0,0} $ is
Hermitian non-negative  functional matrix over $\partial D$.

Consider the following Hermitian forms on the space $[H^{1} (D,S \cup Y)]^m$:
\[
   (u,v)_{+,\gamma, {\mathfrak D}^{(j)}}  =  \left({\mathfrak D}^{(j)} u, {\mathfrak D}^{(j)}v
\right)_{[H^{0,\gamma} (D)]^k} +  (a_{0,0} u, v)_{[H^{0,\gamma} (D)]^m} +
 (b_1^{-1} b_{0,0}  u, v)_{[H^{0,\gamma} (\partial D \setminus S)]^m} .
\]
The form $(\cdot,\cdot)_{+,\gamma, {\mathfrak D}^{(2)}}$ is strongly coercive, i.e.
\begin{equation} \label{eq.strong.coercive}
\|{\mathfrak D}^{(2)} u \|^2 _{[H^{0,\gamma} (D)]^{k_1}} \geq c
\|\nabla_m u_j \|^2_{H^{0,\gamma} (D)} \mbox{ for all } u \in [H^1 (D)]^m
\end{equation}
with a constant $c$ being independent on $u$. The forms corresponding
to operators ${\mathfrak D}^{(1)}$ and  ${\mathfrak D}^{(3)}$ are not strongly
coercive because ${\mathfrak D}^{(1)} u =0$ with non-constant
vector $u=x_i \vec{e}_j - x_j \vec{e}_i$, $i\ne j$ and ${\mathfrak D}^{(3)} \nabla _m h =0$ in
$D$ for any harmonic function $h$ in $D$.

Denote by $H^{+,\gamma}_{{\mathfrak D}^{(j)}} (D)$, $j=1,2,3$, the completion of  $[H^{1} ( D,S
\cup Y)]^k$ with respect to the norm $\|\cdot\|_{+,\gamma,{\mathfrak D}^{(j)}}$ induced by the
inner product $(\cdot , \cdot )_{+,\gamma,{\mathfrak D}^{(j)}}$ (of course, if it is an inner
product).

\begin{lemma} \label{l.factor} The Hermitian form $(\cdot,\cdot)_{+,\gamma,{\mathfrak D}^{(j)}}$
defines an inner product on $[H^{1} (D,S \cup Y)]^m$ if one of the following
conditions holds true:

1) the open set $S \subset \partial D$ is not empty (in the topology of $\partial D$);

2) $ a_{0,0} \geq c_0 I_m \mbox{ in } \overline U$ with a constant $c_0>0$ on a non-empty
open set  $U \subset D$;

3) $ b_1^{-1} b_{0,0} \geq c_1 I_m  \mbox{ in } \overline V$ with a constant
$c_1>0$ on a non-empty open set  $V \subset \partial D\setminus S$.

Besides, in these cases we have:

a) the space  $[H^{1,\gamma} (D,S)]^m$ is continuously embedded in $H^{+,\gamma}_{\mathfrak D}
(D)$ if the components of the matrix $\rho b_1^{-1} b_{0,0}$ belong to
$L^\infty (\partial D  \setminus S)$;

b) the elements of $H^{+,\gamma}_{{\mathfrak D}^{(j)}} (D)$ belong to $[H^{1}_{\mathrm{loc}} (D
\cup S,S)]^m$.
\end{lemma}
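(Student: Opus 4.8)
The plan is to prove the three items in turn, the main work being the positivity statement for the Hermitian form and the interior regularity claim (b).

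\emph{Positivity of the form.} We must show that under any one of 1)--3) the form $(\cdot,\cdot)_{+,\gamma,{\mathfrak D}^{(j)}}$ is definite on $[H^1(D,S\cup Y)]^m$, i.e. that $(u,u)_{+,\gamma,{\mathfrak D}^{(j)}}=0$ with $u\in[H^1(D,S\cup Y)]^m$ forces $u=0$. Since the form is a sum of three non-negative terms (the coercivity of the middle term follows from $a_{0,0}\ge 0$ and of the boundary term from $b_1^{-1}b_{0,0}\ge 0$), vanishing of the form forces ${\mathfrak D}^{(j)}u=0$ in $D$ together with $(a_{0,0}u,u)_{[H^{0,\gamma}(D)]^m}=0$ and $(b_1^{-1}b_{0,0}u,u)_{[H^{0,\gamma}(\partial D\setminus S)]^m}=0$. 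The first step is to describe, for each $j$, the null space of ${\mathfrak D}^{(j)}$ on $D$: for ${\mathfrak D}^{(2)}$ it is $\{0\}$ by the strong coercivity \eqref{eq.strong.coercive}, for ${\mathfrak D}^{(1)}$ it consists of the infinitesimal rigid motions $u=a+Lx$ with $L$ skew-symmetric and $\operatorname{div}_m u=0$ (whence $a+Lx$ with $L$ skew), and for ${\mathfrak D}^{(3)}$ it consists of vectors $u=\nabla_m h$ with $h$ harmonic and $\mu\,\mathrm{rot}_m u=0$. In each case the null space is a finite-dimensional space of real-analytic vector fields in $D$. Now invoke the hypotheses: in case 2), $(a_{0,0}u,u)=0$ with $a_{0,0}\ge c_0 I_m$ on $U$ forces $u\equiv 0$ on $U$, and by unique continuation for real-analytic vector fields $u\equiv 0$ in $D$; in case 3), $u\in H^1(D,S\cup Y)$ already has vanishing trace on $S\cup Y$, and the boundary term forces the trace of $u$ to vanish also on $V\subset\partial D\setminus S$, so on a nonempty relatively open subset of $\partial D$, and again (by analyticity up to the smooth part of $\partial D$, or simply because a rigid motion / harmonic gradient vanishing on an open boundary piece with its normal derivative there must vanish — here one uses the structure of the null space together with the additional constraint ${\mathfrak D}^{(j)}u=0$) one concludes $u\equiv 0$. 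Case 1) is the subtle one: here one argues that an element of $[H^1(D,S\cup Y)]^m$ whose ${\mathfrak D}^{(j)}$ vanishes and whose trace vanishes on $S\cup Y$, with $S$ a nonempty open subset of $\partial D$, must be $0$; for ${\mathfrak D}^{(1)}$ a rigid motion with zero trace on a nonempty open part of $\partial D$ is zero, and for ${\mathfrak D}^{(3)}$ a gradient of a harmonic function with zero Dirichlet and (from $\mathrm{rot}_m\nabla h=0$ being automatic) — more precisely $u=\nabla h$ with $\operatorname{tr}u=0$ on open $S$ gives $\nabla h=0$ on $S$, hence $h$ is locally constant near $S$, and by unique continuation for harmonic functions $h$ is constant, so $u=0$.

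\emph{Continuous embedding (a).} Given that $\rho b_1^{-1}b_{0,0}$ has $L^\infty(\partial D\setminus S)$ entries, I estimate for $u\in[H^{1,\gamma}(D,S)]^m$ each of the three terms of $\|u\|_{+,\gamma,{\mathfrak D}^{(j)}}^2$ by $C\|u\|_{[H^{1,\gamma}(D,S)]^m}^2$. The first term is controlled by $\|{\mathfrak D}^{(j)}u\|_{[H^{0,\gamma}(D)]^k}^2\le C\sum_j\|\nabla_m u_j\|_{H^{0,\gamma}(D)}^2+C\|u\|_{[H^{0,\gamma}(D)]^m}^2$ using that the coefficients $\mu,\lambda$ are bounded (for the first-order part) — but note the weights: $\nabla_m$ carries weight exponent $|\alpha|-\gamma-1=-\gamma$ in $H^{1,\gamma}$, matching $H^{0,\gamma}$, and the zeroth-order mismatch $\rho^{-\gamma}$ vs $\rho^{-\gamma-1}$ is absorbed because $0\le\rho\le 1$. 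The $a_{0,0}$ term is bounded since $\rho^2 a_{0,0}\in L^\infty$ and $\|\rho^{-\gamma}u\|_{L^2}^2\le\|\rho^{-\gamma-1}u\|_{L^2}^2$. The boundary term uses the trace inequality of \lemref{eq.l.emb.s.gamma} (with $s=1$) together with $\rho b_1^{-1}b_{0,0}\in L^\infty(\partial D\setminus S)$ to bound it by $C\|\rho^{-\gamma}\mathrm{tr}\,u\|_{[H^{0}(\partial D\setminus S)]^m}^2\le C\|u\|_{[H^{1/2,\gamma}(\partial D)]^m}^2\le C\|u\|_{[H^{1,\gamma}(D)]^m}^2$; since $u\in[H^{1,\gamma}(D,S)]^m$ has trace supported away from $S$, the boundary integral over $\partial D\setminus S$ is well defined.

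\emph{Interior regularity (b).} For $u\in H^{+,\gamma}_{{\mathfrak D}^{(j)}}(D)$, take any $x_0\in D\cup S$ and a cutoff $\chi\in C^\infty$ supported in a small ball about $x_0$ with $\chi\equiv 1$ near $x_0$ and, if $x_0\in S$, chosen so that the ball meets $\partial D$ only inside $S$. Away from $Y$ the weight $\rho$ is bounded below by a positive constant on the support of $\chi$ (shrinking if necessary, as $S\subset\partial S$ is disjoint from the relevant neighborhoods — here one uses $Y\subset\partial S$), so the weighted norms are equivalent to the unweighted ones there and $\chi u$ lies in (an approximating sequence converging in) $[H^1]^m$ with $\|{\mathfrak D}^{(j)}(\chi u)\|_{[L^2]^k}<\infty$. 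Now I use the second G\aa rding-type / ellipticity estimate: for each ${\mathfrak D}^{(j)}$ one has the Korn-type inequality $\|v\|_{[H^1(B)]^m}^2\le C\big(\|{\mathfrak D}^{(j)}v\|_{[L^2(B)]^k}^2+\|v\|_{[L^2(B)]^m}^2\big)$ for $v$ supported in a fixed ball $B$ (for ${\mathfrak D}^{(2)}$ this is \eqref{eq.strong.coercive}; for ${\mathfrak D}^{(1)}$ it is the classical second Korn inequality; for ${\mathfrak D}^{(3)}$ it follows since $\mathrm{rot}_m v$ and $\operatorname{div}_m v$ together with $v$ control $\nabla v$ on a ball). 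Applying this to $v=\chi u$ (which has compact support, so the functional-form control $\|{\mathfrak D}^{(j)}(\chi u)\|_{[L^2]}<\infty$ extracted from membership in $H^{+,\gamma}$ plus a commutator term in lower-order $\|u\|_{[L^2_{\mathrm{loc}}]}$ applies) yields $\chi u\in[H^1]^m$, hence $u\in[H^1_{\mathrm{loc}}(D\cup S)]^m$, and since $\chi u$ has trace $0$ on $S$ (inherited from the defining sequence in $[H^1(D,S\cup Y)]^m$), in fact $u\in[H^1_{\mathrm{loc}}(D\cup S,S)]^m$.

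The main obstacle I expect is the positivity argument in case 1) and the precise handling of the boundary-trace term in (b): one must justify that near points of $S$ (but away from $\partial S\supset Y$) the passage between weighted and unweighted Sobolev norms is harmless and that the trace of $u$ genuinely vanishes on $S$ — this rests on the density of $C^1(\overline D,S\cup Y)$ in $H^{+,\gamma}_{{\mathfrak D}^{(j)}}(D)$ and on the localization keeping supports away from $Y$.
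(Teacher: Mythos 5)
Your overall strategy --- real analyticity of solutions of ${\mathfrak D}^{(j)}u=0$ plus unique continuation from an open set or from an open piece of the boundary for the definiteness, and a localized G\aa{}rding/Korn estimate for b) --- is the same as the paper's. But two of your intermediate claims are wrong and one step is circular. On the null spaces: the kernel of ${\mathfrak D}^{(2)}$ is the space of constant vectors, not $\{0\}$ (estimate (\ref{eq.strong.coercive}) controls $\nabla_m u$, not $u$); and the kernel of ${\mathfrak D}^{(3)}$ is \emph{infinite}-dimensional --- the paper notes just before the lemma that ${\mathfrak D}^{(3)}\nabla_m h=0$ for every harmonic $h$ --- so your blanket claim that ``in each case the null space is a finite-dimensional space'' is false. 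Moreover the global representation $u=\nabla_m h$ on which your cases 1) and 3) rely for $j=3$ is available only when $D$ is simply connected, which is not assumed (only bounded Lipschitz). The paper never describes the kernels: it observes that each normalized ${\mathfrak D}^{(j)}$ has constant coefficients and injective principal symbol, gets real analyticity from Petrovskii's theorem, and then disposes of condition 2) by uniqueness of real-analytic functions and of conditions 1) and 3) by the uniqueness theorem for the Cauchy problem for systems with injective symbol (\cite[Theorem 2.8]{ShTaLMS}): ${\mathfrak D}^{(j)}u=0$ in $D$ and $u=0$ on a nonempty open piece of $\partial D$ force $u\equiv 0$. Your step ``hence $h$ is locally constant near $S$'' is exactly this Cauchy uniqueness in disguise and must be invoked as such; as written it is a non sequitur, since vanishing of $\nabla h$ on a boundary piece does not by itself give vanishing in an interior neighborhood.

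In part b) your localized inequality carries the term $\|v\|^2_{[L^2(B)]^m}$ on the right, and your commutator is estimated by $\|u\|_{[L^2_{\mathrm{loc}}]}$; but for an abstract element of the completion $H^{+,\gamma}_{{\mathfrak D}^{(3)}}(D)$ no local $L^2$ bound is available a priori --- indeed even the global embedding into $[L^2(D)]^m$ fails without the extra hypotheses (\ref{eq.a}) or (\ref{eq.b}), as the first example following Lemma \ref{l.factor.1} shows. So your argument assumes a piece of what it is supposed to prove. The paper's route is to note that, by analyticity, ${\mathfrak D}^{(j)}$ has no nonzero solutions in $[H^1(D,\partial D)]^m$, which allows the G\aa{}rding inequality to be sharpened to (\ref{eq.Ga}) with \emph{no} zero-order term on the right; it is this strengthened estimate, applied to cut-off Cauchy sequences, that delivers b). Part a) of your proposal is essentially fine (the paper only says it ``can be checked directly''), modulo the exact weight exponent in the boundary trace term, which should be $\rho^{-\gamma-1/2}$ rather than $\rho^{-\gamma}$.
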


\begin{proof} Regarding the statement on the scalar product, we only need to check that
$(u,u)_{+,\gamma,{\mathfrak D}}=0$ for $u \in [H^{1} (D,S \cup Y)]^m$ implies $u=0$ in $D$.

But the first order operators
\[
\left( \begin{array}{cc} \sqrt{\mu^{-1}} I_{k_1-1}& 0 \\ 0 & \sqrt{\lambda^{-1}}\\
 \end{array}\right) {\mathfrak D}^{(1)},
 \quad
\left( \begin{array}{cc} \sqrt{\mu^{-1}} I_{k_2-1} & 0 \\ 0 &
\sqrt{(\mu+\lambda)^{-1}} \\ \end{array}\right){\mathfrak D}^{(2)}, 
\]
\[
\left( \begin{array}{cc} \sqrt{\mu^{-1}} I_{k_3-1}& 0 \\ 0 & \sqrt{(2\mu+\lambda)^{-1}}\\
 \end{array}\right) {\mathfrak D}^{(3)},
\]
have constant coefficients and
injective principal symbols. Then Petrovskii Theorem implies that the distributions-solutions
to ${\mathfrak D}^{(j)} u =0$ in $D$, $j=1,2,3$, are real analytic there. Hence the
statement on the scalar product under condition 2) follows from the Uniqueness Theorem for real
analytic functions.

Then it follows from conditions  1) or 3) that any vector $ u\in[H^1 (D, S\cup Y)]^m$ satisfying
$(u,u)_{+,\gamma,{\mathfrak D}}=0$ vanishes on an open non-empty subset of $\partial D $.
As  $u$ also satisfies ${\mathfrak D}^{(j)} u =0$ in $D$,
the Uniqueness Theorem for the Cauchy problem for systems with injective symbols
implies that $u\equiv 0$ in $D$ (see, for instance, \cite[Theorem 2.8]{ShTaLMS}).

As the solutions of the system ${\mathfrak D}^{(j)}u=0$ in $D$ real analytic there, then there
are no such solutions of the class $[H^1 (D, \partial D)]^m$. Then it follows from  the
G\aa{}rding inequality for the Hermitian form $({\mathfrak D} \cdot, {\mathfrak D} \cdot )_{[L^2
(D)]^k}$ induced by the strongly elliptic operator ${\mathfrak D}^*{\mathfrak D}$
that
\begin{equation} \label{eq.Ga}
\|u\|^2_{[H^{1} (D)]^m} \leq const \, ({\mathfrak D} u, {\mathfrak D} u )_{[L^2
(D)]^k} \mbox{ for all } u \in [H^1 (D, \partial D)]^m,
\end{equation}
the constant $const$ being independent on $u$. In particular, this implies that
the statement b) holds true

Finally, the statement a) can be checked directly.
\end{proof}

\begin{lemma} \label{l.factor.1} Let  $\mu, \lambda \in L^\infty (D)$, $(\mu+\lambda)\geq 0$.
If $\rho\equiv 1$ then the embedding $H^{+,\gamma}_{{\mathfrak D}^{(j)}} (D) \to
[ H^{1} (D)]^m$, $j=1,2$ is bounded under one of the conditions 1), 2), 3) of Lemma
\ref{l.factor}. If there is $q>0$ such that
\begin{equation} \label{eq.a}
\rho^2 a_{0,0} \geq q I_m \mbox{ in } \overline D \setminus Y,
\end{equation}
then the space $H^{+,\gamma}_{{\mathfrak D}^{(j)}} (D)$ is continuously
embedded into $[ H^{1,\gamma} (D)]^m$, $j=1,2$.
\end{lemma}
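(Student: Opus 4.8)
The plan is to prove, for every $u$ in the dense subspace $[H^{1}(D,S\cup Y)]^m$, the norm inequality $\|u\|_{[H^{1}(D)]^m}\le C\,\|u\|_{+,\gamma,\mathfrak D^{(j)}}$ in the case $\rho\equiv1$ and the inequality $\|u\|_{[H^{1,\gamma}(D)]^m}\le C\,\|u\|_{+,\gamma,\mathfrak D^{(j)}}$ under (\ref{eq.a}), and then to pass to the completions. Throughout I use the elementary identities $\|v\|^2_{[H^{0,\gamma}(D)]^{k}}=\|\rho^{-\gamma}v\|^2_{[L^2(D)]^{k}}$ and $\|u\|^2_{[H^{1,\gamma}(D)]^m}=\|\rho^{-\gamma-1}u\|^2_{[L^2(D)]^m}+\|\rho^{-\gamma}\nabla_m u\|^2_{[L^2(D)]^{m^2}}$ (so that for $\rho\equiv1$ all $H^{0,\gamma}(D)$ reduce to $L^2(D)$), the standing hypothesis $\mu\ge\kappa>0$, and the observation that (\ref{eq.a}) implies condition 2) of Lemma~\ref{l.factor}, so that in the weighted case $H^{+,\gamma}_{\mathfrak D^{(j)}}(D)$ is well defined.

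\emph{The case $\rho\equiv1$.} First I control the gradient of $u$ modulo an $L^2(D)$-term. Since $\mathfrak D^{(2)}$ contains the block $\sqrt{\mu}\,(\nabla_m\otimes I_m)$ and $\mu\ge\kappa$, one has $\|\nabla_m u\|^2_{[L^2(D)]^{m^2}}\le\kappa^{-1}\|\mathfrak D^{(2)}u\|^2_{[L^2(D)]^{k_2}}$ (equivalently, the strong coercivity (\ref{eq.strong.coercive})); similarly $\mathfrak D^{(1)}$ contains $\sqrt{\mu}\,\mathbb D_m$, whence $\|\mathbb D_m u\|^2_{[L^2(D)]^{k_1-1}}\le\kappa^{-1}\|\mathfrak D^{(1)}u\|^2_{[L^2(D)]^{k_1}}$, and the classical (first) Korn inequality on the bounded Lipschitz domain $D$, $\|u\|^2_{[H^1(D)]^m}\le C\bigl(\|\mathbb D_m u\|^2_{[L^2(D)]^{k_1-1}}+\|u\|^2_{[L^2(D)]^m}\bigr)$, then gives $\|\nabla_m u\|^2_{[L^2(D)]^{m^2}}\le C\bigl(\|\mathfrak D^{(1)}u\|^2_{[L^2(D)]^{k_1}}+\|u\|^2_{[L^2(D)]^m}\bigr)$. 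It remains to absorb the $L^2(D)$-term, i.e.\ to prove $\|u\|_{[L^2(D)]^m}\le C\,\|u\|_{+,\gamma,\mathfrak D^{(j)}}$ for $j=1,2$. I argue by contradiction: if this fails there are $u_k\in[H^1(D,S\cup Y)]^m$ with $\|u_k\|_{[L^2(D)]^m}=1$ and $\|u_k\|_{+,\gamma,\mathfrak D^{(j)}}\to0$. By the preceding step the sequence $(u_k)$ is bounded in $[H^1(D)]^m$, so by the Rellich--Kondrashov theorem a subsequence converges in $[L^2(D)]^m$; since in addition $\mathbb D_m u_k\to0$ (for $j=1$), resp.\ $\nabla_m u_k\to0$ (for $j=2$), in $L^2$, Korn's inequality, resp.\ (\ref{eq.strong.coercive}), makes this subsequence Cauchy, hence convergent, in $[H^1(D)]^m$, with a limit $u$ satisfying $\|u\|_{[L^2(D)]^m}=1$ and $\mathbb D_m u=0$, resp.\ $\nabla_m u=0$; thus $u(x)=a+Bx$ with $B=-B^{\mathrm{T}}$, resp.\ $u$ is a constant vector. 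Finally each of the alternatives 1), 2), 3) forces $u\equiv0$: under 1) continuity of the trace gives $\operatorname{tr}u=0$ on the non-empty relatively open set $S\subset\partial D$; under 2) the bound $0\le c_0\|u_k\|^2_{[L^2(U)]^m}\le(a_{0,0}u_k,u_k)_{[L^2(D)]^m}\le\|u_k\|^2_{+,\gamma,\mathfrak D^{(j)}}\to0$ gives $u=0$ on $U$; under 3) the analogous bound $0\le c_1\|\operatorname{tr}u_k\|^2_{[L^2(V)]^m}\le(b_1^{-1}b_{0,0}u_k,u_k)_{[L^2(\partial D\setminus S)]^m}\to0$ gives $\operatorname{tr}u=0$ on $V$; and in every case $u$ is an infinitesimal rigid motion (or a constant) vanishing on a set of positive $\mathcal{H}^{m-1}$-, resp.\ Lebesgue-, measure, whereas the zero set of a non-trivial such field is empty or an affine subspace of dimension $\le m-2$. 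This contradiction, combined with the gradient bound, proves the first assertion.

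\emph{The weighted case.} Assume (\ref{eq.a}). As the other two summands of $\|u\|^2_{+,\gamma,\mathfrak D^{(j)}}$ are non-negative and $\langle a_{0,0}(x)\xi,\xi\rangle\ge q\,\rho(x)^{-2}|\xi|^2$ for a.e.\ $x$, we obtain immediately $\|\rho^{-\gamma-1}u\|^2_{[L^2(D)]^m}\le q^{-1}(a_{0,0}u,u)_{[H^{0,\gamma}(D)]^m}\le q^{-1}\|u\|^2_{+,\gamma,\mathfrak D^{(j)}}$, which controls the first half of $\|u\|^2_{[H^{1,\gamma}(D)]^m}$. For the gradient, $\mu\ge\kappa$ yields $\kappa\|\rho^{-\gamma}\mathbb D_m u\|^2_{[L^2(D)]^{k_1-1}}\le\|\rho^{-\gamma}\mathfrak D^{(j)}u\|^2_{[L^2(D)]^{k_j}}\le\|u\|^2_{+,\gamma,\mathfrak D^{(j)}}$; for $j=2$ the left member may be replaced outright by $\kappa\|\rho^{-\gamma}\nabla_m u\|^2_{[L^2(D)]^{m^2}}$ and the case is finished. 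For $j=1$ there remains the weighted Korn inequality $\|\rho^{-\gamma}\nabla_m u\|^2_{[L^2(D)]^{m^2}}\le C\bigl(\|\rho^{-\gamma}\mathbb D_m u\|^2_{[L^2(D)]^{k_1-1}}+\|\rho^{-\gamma-1}u\|^2_{[L^2(D)]^m}\bigr)$, which I derive from the unweighted one applied to $v:=\rho^{-\gamma}u$. Indeed, for $u\in[C^1(\overline D,Y)]^m$ one has $v\in[H^1(D)]^m$ (since $\rho$ is $C^1$ and bounded away from $0$ off $Y$ while $u$ vanishes near $Y$); moreover $\partial\rho/\partial x_i\in L^\infty(D)$ gives the pointwise estimates $|\mathbb D_m v-\rho^{-\gamma}\mathbb D_m u|\le C\,\rho^{-\gamma-1}|u|$ and $|\rho^{-\gamma}\nabla_m u-\nabla_m v|\le C\,\rho^{-\gamma-1}|u|$, while $0\le\rho\le1$ gives $\|v\|_{[L^2(D)]^m}=\|\rho^{-\gamma}u\|_{[L^2(D)]^m}\le\|\rho^{-\gamma-1}u\|_{[L^2(D)]^m}$, so all commutator and zeroth-order contributions of Korn's inequality for $v$ are absorbed into the right-hand side. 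Passing to the completions completes the proof.

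\emph{The main difficulty.} The load-bearing ingredients are Korn's inequality on Lipschitz domains and --- more delicately --- the geometric fact that a non-trivial infinitesimal rigid motion cannot vanish on any open subset of $D$ or on any non-empty relatively open subset of $\partial D$; this is exactly what makes the alternatives 1), 2), 3) sufficient for $j=1$, and it breaks down for the non-coercive operator $\mathfrak D^{(3)}$, whose kernel consists of gradients of harmonic functions and is infinite-dimensional. In the weighted part the only genuine work is the weighted Korn inequality, but it reduces cleanly to the unweighted one by conjugating with $\rho^{-\gamma}$ and using $\partial\rho/\partial x_i\in L^\infty(D)$ together with $0\le\rho\le1$.
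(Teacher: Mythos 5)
Your proof is correct and rests on the same two pillars as the paper's own (two-line) proof: the second Korn inequality for ${\mathfrak D}^{(1)}$ and the strong coercivity estimate (\ref{eq.strong.coercive}) for ${\mathfrak D}^{(2)}$. Where the paper merely cites Fichera and stops, you fill in the details consistently with that route --- the compactness/contradiction argument that absorbs the $L^2$-term using conditions 1)--3) and the kernel of ${\mathbb D}_m$ (infinitesimal rigid motions), and the conjugation by $\rho^{-\gamma}$ that reduces the weighted Korn inequality to the unweighted one.
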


\begin{proof}
The continuity of the embedding $H^{+,\gamma}_{{\mathfrak D}^{(1)}} (D) \to
[ H^{1,\gamma} (D)]^m$ follows from the second Korn inequality (see \cite[formula
(12.11)]{Fi72}). The continuity of the embedding $H^{+,\gamma}_{{\mathfrak D}^{(2)}} (D) \to
[ H^{1,\gamma} (D)]^m$ follows from the strong coercive estimate (\ref{eq.strong.coercive}). 
\end{proof}

For the operator ${\mathfrak D}^{(3)}$ Lemma \ref{l.factor.1} is not true.

\begin{example}  { \rm Take the cylinder
\[
D = \{(x_1,x_2) \in \Omega, 0<x_3 <1 \}
\]
with the base  $\Omega = \{x^2_1 + x_2^2 < 1\}$
in ${\mathbb R}^3$ as the domain. Let, for instance, $\rho
\equiv 1$, $a_{0,0} \equiv \rho^{-2}$, $b_1^{-1} b_{0,0}\in L^{\infty} (\partial D\setminus S)$,
and $S =\emptyset$. Then $H^{1,\gamma} (D) = H^1 (D)$.  Set $ h_m =\Re (x_1+ \iota x_2)^m $,
with $\iota$ being the imaginary unit and $\Re(a)$ being the real part of a complex number
$a$. The function $ h_m$ is harmonic in $D$ as the real part
of the holomorphic monomial $(x_1+ \iota x_2)^m$. It is easy to check that
the system $\{\nabla_3 h_m \}$ is orthogonal in $[H^{1} (\Omega)]^3$ and
$[H^{1} (D)]^3$ (the last one follows from the Fubini Theorem). Then Bessel's Inequality
implies that the sequence $\{ u_m = \nabla_3 h_m /\|\nabla_3 h_m\|_{[H^1 (D)]^3}\}$
converges weakly to zero in $[H^{1} (D)]^3$. It follows from the Sobolev Embedding Theorem that
the sequence $\{ u_m \}$ converges to zero $[L^2 (D)]^3$ and $[L^2 (\partial D)]^3$
while $\|u_m\|_{[H^1 (D)]^3}=1$. Finally, as ${\mathfrak D}^{(3)} u_m=0$ we see that
$\|u_m\|_{H^{+,\gamma}_{{\mathfrak D}^{(3)}} (D)} \to 0$ if $m \to +\infty$. This means
that the continuous embedding $H^{+,\gamma}_{{\mathfrak D}^{(3)}} (D) \hookrightarrow
[H^{1,\gamma} (D)]^3$ is impossible. }
\end{example}

\begin{example} {\rm In order to illustrate the case $S \ne \emptyset$ we set $m=2$. In this
situation one can easily modify the famous Hadamard's example related to the ill-posed Cauchy
for the Laplace operator. Namely, take the upper half-circle  $\{x_2>0, x^2_1 + x_2^2 < 1\}$ as
the domain $D$, and take the interval $[-1,1]\subset Ox_1$ as  the set $S$. For instance,
let $a_{0,0} \equiv 0$, $b_{0,0} \equiv 0 $ on $\partial D\setminus S)$, $\rho \equiv 1$.
Note that the matrix $\left(\begin{array}{ll} \ \mbox{rot}_2 \\  \ \rm{div} _2
\end{array} \right)$ is adjoint to the Cauchy-Riemann system. On $\partial D$ we consider the
sequence  $\{ v_p \}$ with the components
\[
v^{(1)}_p (x)=\left\{ \begin{array}{ll} \frac{1}{p}\sin{(\pi p x_1)}, & x_2=0, \\
0, & x_2  >0. \end{array}\right. , \quad v^{(2)}_p \equiv 0.
\]
Obviously, each $v_p$ is a Lipschitz function on $\partial D$ and
the sequence $\{ v_p\}$ converges to zero in $[H^{1/2} (\partial D)]^2$.
If $P _\Delta $ stands for the Poisson  integral for the Dirichlet Problem for the Laplace
operator in  $D$ then the sequence $\{ P_\Delta(v_m)\}$ converges to zero
in  $[H^{1} (D)]^2$. Now it is clear that the functions
\[
\left\{ u_p =  \left( \begin{array}{ll} \Re(\sin{\pi (x_1 - \iota x_2))} \\
\Im(\sin{\pi (x_1 - \iota x_2))} \end{array} \right) - P _\Delta (v_p) \right\}
\]
belong to $[H^1 (D)]^2$ and they equal to zero on $S$ (here $\Im(a)$ denotes the imaginary
part of a complex number $a$). Moreover, by the construction,
the sequence $\{ {\mathfrak D}^{(3)}  u_p = - {\mathfrak D}^{(3)}  P_\Delta (v_p) \}$
converges to zero in $L^2 (D)$. That is why $\{ u_p\}$ converges to zero in
$H^{+,\gamma} _{{\mathfrak D}^{(3)}}(D)$ but it can not be convergent even
in $[L^2 (D)]^2$.}
\end{example}

However, one can indicate conditions providing useful embedding theorems for the spaces generated
by  non-coercive forms (see \cite{ADN59}). The following statement describes reasonable
assumptions for $H^{+,\gamma} _{{\mathfrak D}^{(3)}}(D)$ to be embedded into
Sobolev-Slobodetskii spaces. The scheme of its  proof is similar to the cases of scalar
operators (see \cite{PolkShla13}, \cite{TarShla13a}).

\begin{theorem}
\label{t.emb.half}
Let $\mu$, $\lambda$ belong to the class $C^\infty (X)$ with a neighborhood
$X$ of the compact $\overline D$, and let $\rho \equiv 1$. Then

1)  the space $H^{+,\gamma}_{{\mathfrak D}^{(3)}} (D)$ is  continuously embedded into
$[L^{2} (D)]^m$, if condition (\ref{eq.a}) holds true.

2) the space $H^{+,\gamma}_{{\mathfrak D}^{(3)}} (D)$ is continuously embedded into
$[H^{1/2-\varepsilon} (D)]^m$ with any $\varepsilon >0$ if
\begin{equation} \label{eq.b}
b_1^{-1} b_{0,0}  \geq c_1 I_m \mbox{ on } \partial D \setminus S \mbox{ with a constant
} c_1>0.
\end{equation}
Moreover, if  $\partial D \in C^2$, then (\ref{eq.b}) implies that  the
space $H^{+,\gamma}_{{\mathfrak D}^{(3)}} (D)$ is continuously embedded into
$[H^{1/2} (D)]^m$.
\end{theorem}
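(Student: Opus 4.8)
The plan is to reduce the embedding statement for the non-coercive form to an a priori estimate for the associated boundary value problem, which in turn follows from freezing coefficients and treating $\mathfrak{D}^{(3)}$ as a perturbation of the Cauchy--Riemann--type system appearing in \eqref{eq.factor.1}. The key observation, already used in the examples, is that $\mathfrak{D}^{(3)} u = 0$ means $\mbox{rot}_m\, u = 0$ and $\mbox{div}_m\, u = 0$, so $u$ is locally a gradient of a harmonic function; hence the "bad" directions for coercivity are precisely the harmonic gradients, and one must recover control of $u$ from the boundary conditions encoded in the form rather than from the interior. Since $\rho \equiv 1$, the weighted spaces coincide with the usual Sobolev spaces $H^s(D)$, which simplifies matters considerably: the whole argument can be carried out with the standard Sobolev-Slobodetskii scale and the standard trace theorem.

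First I would prove part 1). Take $u \in [H^1(D,S\cup Y)]^m$; by definition $\|u\|_{+,\gamma,\mathfrak{D}^{(3)}}^2 = \|\mathfrak{D}^{(3)}u\|_{[L^2(D)]^{k_3}}^2 + (a_{0,0}u,u)_{[L^2(D)]^m} + (b_1^{-1}b_{0,0}u,u)_{[L^2(\partial D\setminus S)]^m}$. Under \eqref{eq.a} the middle term dominates $q\|u\|_{[L^2(D)]^m}^2$ directly, and since $H^{+,\gamma}_{\mathfrak{D}^{(3)}}(D)$ is the completion of $[H^1(D,S\cup Y)]^m$ in this norm, the identity map extends to a bounded injection into $[L^2(D)]^m$. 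This is essentially immediate once one checks that the $L^2$-closure is compatible with the abstract completion, which uses the local regularity statement b) of \lemref{l.factor} to identify limit elements as genuine $L^2$ functions. So part 1) is the easy part.

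The substance is in part 2). Here the strategy is: show that \eqref{eq.b} forces $\|u\|_{[H^{1/2-\varepsilon}(D)]^m}$ (or $\|u\|_{[H^{1/2}(D)]^m}$ when $\partial D\in C^2$) to be controlled by $\|u\|_{+,\gamma,\mathfrak{D}^{(3)}}$. I would argue as follows. Because $\mathfrak{D}^{(3)}u = (\sqrt{\mu}\,\mbox{rot}_m u,\ \sqrt{2\mu+\lambda}\,\mbox{div}_m u)^T$ controls $\mbox{rot}_m u$ and $\mbox{div}_m u$ in $L^2(D)$ (using $\mu \ge \kappa$, $2\mu+\lambda\ge\kappa$ and smoothness of $\mu,\lambda$), and the boundary form with \eqref{eq.b} controls a suitable combination of traces of $u$ on $\partial D\setminus S$, one is in the setting of a div-rot (Friedrichs-type / Hodge-type) estimate with mixed boundary conditions. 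The classical fact is: on a Lipschitz domain, a vector field $u$ with $\mbox{div}_m u \in L^2$, $\mbox{rot}_m u \in L^2$, and either normal or tangential trace controlled on a portion of the boundary lies in $H^{1/2}$; the $\varepsilon$-loss appears precisely because on a mere Lipschitz domain one only gets $H^{1/2-\varepsilon}$ in general, while on $C^2$ (or sufficiently smooth) domains the sharp $H^{1/2}$ is available (this is the Costabel--type regularity result for Maxwell-type boundary value problems). Concretely I would: (i) freeze $\mu,\lambda$ at a point, (ii) write the difference as a lower-order perturbation absorbable by the smoothness hypothesis $\mu,\lambda\in C^\infty(X)$, (iii) pass through a partition of unity distinguishing a neighborhood of $S$ (where $u$ vanishes, so the Dirichlet condition is active and interior elliptic regularity of the Hodge system applies up to $H^1_{\mathrm{loc}}$) from the rest of $\partial D\setminus S$ (where \eqref{eq.b} supplies the needed trace control), and (iv) invoke the div-rot regularity theorem on each patch, summing up. The Hadamard-type Example above shows this is sharp: without \eqref{eq.b} no such embedding holds.

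The main obstacle I expect is step (iv): making precise the div-rot regularity statement with the \emph{specific} mixed boundary operator $B = b_1\nu_{\mathfrak{D}^{(3)}} + b_0 + \partial_\tau$ — in particular, verifying that the quadratic form $(b_1^{-1}b_{0,0}u,u)_{\partial D\setminus S}$ together with $\|\mathfrak{D}^{(3)}u\|_{L^2}^2$ really majorizes a full "$H^{1/2}$-coercive" quantity, and not merely a degenerate one. The conormal derivative $\nu_{\mathfrak{D}^{(3)}}$ differs from the stress operator $\sigma$ only by the tangential term $2\mu\,\partial_{\tau_0}$ (Remark \ref{r.stress.op}, formula \eqref{eq.stress.var}), so one must be careful that the tangential pieces do not destroy the estimate; this is exactly where the hypothesis that $\partial_\tau$ vanishes on $S$ and $b_0$ is nondegenerate there is used. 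On the smooth-boundary case, citing the precise regularity theorem of Costabel for the relevant boundary value problem (the same circle of ideas as in \cite{PolkShla13}, \cite{TarShla13a} for scalar operators, as the authors indicate) closes the gap; on the Lipschitz case one settles for $H^{1/2-\varepsilon}$ via the corresponding weaker regularity result. The rest — partition of unity, perturbation of frozen coefficients, the passage from $[H^1(D,S\cup Y)]^m$ to its completion — is routine bookkeeping.
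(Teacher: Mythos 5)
Your part 1) matches the paper's (which simply declares it obvious), but for part 2) you take a genuinely different route, and it has a concrete gap. The paper does not use div--rot regularity at all: it constructs a two-sided fundamental solution $\phi_m(x,y)$ of ${\mathcal L}_{{\mathfrak D}^{(3)}}$ (the Kelvin--Somigliana kernel in the constant-coefficient case), shows that the volume potential $\Phi\circ\chi_D$ maps $[H^{\varepsilon-1/2}(D)]^m$ boundedly into $[H^{\varepsilon+3/2}(X)]^m$, and then bounds $\|u\|_{[H^{1/2-\varepsilon}(D)]^m}$ by duality from the integration-by-parts identity
$(v,u)_{[L^2(D)]^m}=({\mathfrak D}\Phi I_m v,{\mathfrak D}u)_{[L^2(D)]^k}+(\nu_{\mathfrak D}\Phi I_m v,u)_{[L^2(\partial D\setminus S)]^m}$;
the $\varepsilon$-loss is exactly the failure of the trace theorem at $\varepsilon=0$, and the endpoint $H^{1/2}$ for $\partial D\in C^2$ is recovered by splitting $u=P_1u+G_1{\mathcal L}_{\mathfrak D}u$ with the Green and Poisson operators of the Dirichlet problem, following \cite{PolkShla13}. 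That argument works uniformly in $m$ and uses nothing about $u$ beyond the two quantities entering $\|u\|_h$ in (\ref{eq.h.norm}).

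The gap in your plan sits precisely at the point you flag as ``the main obstacle'' and then dismiss by citation. Every div--rot (Costabel/Gaffney/Friedrichs) estimate of the kind you invoke has the form $\|u\|_{H^{1/2}}\le C\bigl(\|u\|_{L^2(D)}+\|\mbox{div}_m u\|_{L^2(D)}+\|\mbox{rot}_m u\|_{L^2(D)}+\|u\|_{L^2(\partial D)}\bigr)$, i.e.\ it presupposes control of $\|u\|_{L^2(D)}$ --- and under hypothesis (\ref{eq.b}) alone, with $a_{0,0}\ge 0$ possibly vanishing, the form $(\cdot,\cdot)_{+,\gamma,{\mathfrak D}^{(3)}}$ gives no such control: that control is part of what the theorem is supposed to deliver. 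Removing the $\|u\|_{L^2(D)}$ term requires an additional compactness-plus-uniqueness (Peetre-type) step, using that $\mbox{div}_m u=0$, $\mbox{rot}_m u=0$ and vanishing of the full trace force $u\equiv 0$; this step is absent from your outline and cannot be skipped. Two smaller points: freezing $\mu,\lambda$ is unnecessary (since $\mu\ge\kappa$ and $2\mu+\lambda\ge\kappa$, the norm $\|{\mathfrak D}^{(3)}u\|_{L^2}$ dominates $\|\mbox{rot}_m u\|_{L^2}+\|\mbox{div}_m u\|_{L^2}$ outright), and the Lipschitz/smooth dichotomy is misattributed --- Costabel's $H^{1/2}$ embedding already holds on Lipschitz domains for $m=3$, so if your reduction closed it would yield the sharp exponent without $\partial D\in C^2$; for general $m\ge 2$, where $\mbox{rot}_m$ is the vorticity matrix, you would additionally need the differential-form version of that theorem, which you do not supply.
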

\begin{proof} The statement 1) is obviously true.

Let estimate (\ref{eq.b}) is fulfilled. Then the norm  $\|\cdot\|_{+,\gamma,
{\mathfrak D}^{(3)}}$ is not weaker than the norm $\|\cdot\|_h$ on $ [H^1 (D, S)]^m$, where
\begin{equation} \label{eq.h.norm}
\|u\|_h =
\Big(
    \| {\mathfrak D}^{(3)} u \|^2_{[L^{2} (D)]^k}
      + \|  u \|_{[L^2 (\partial D \setminus S)]^m}^2
   \Big)^{1/2}, \, u \in [H^1 (D, S)]^m.
\end{equation}
Fix a number $\varepsilon >0$.  Let us show that the norm  $\|\cdot\|_h$ is not weaker
than the norm $\|\cdot\|_{[H^{1/2-\varepsilon} (D)]^m}$ on $ [H^1 (D, S)]^k$. Indeed,
integrating  by parts it is easy to see that a vector function $v \in [C^\infty (X)_{0}]^m$
satisfy $({\mathfrak D}^{(3)})^* {\mathfrak D}^{(3)} v =0$ in $X$ if and only if $ {\mathfrak
D}^{(3)}v =0$ in $X$. As we have seen in the proof of Lemma \ref{l.factor}, any weak solution to
this equation  is real analytic in $X$ and hence we have $v\equiv 0$. Thus, under hypothesis
of the theorem the operator ${\mathcal L}_{{\mathfrak D}^{(3)}}$ has a two-sided fundamental
solution on $X$, say, $\phi_m (x,y)$. For instance, if $\mu$ and $\lambda$ are constants, we
may take the famous Kelvin-Somigliana kernel.

The volume potential
\begin{equation} \label{eq.VP}
\Phi v (x)= \int_{D} \phi_{m} (x,y) v (y) dx   , v \in [L^2 (D)]^m,
\end{equation}
induces the bounded linear operator $\Phi: [L^2 (D)]^m \to [H^2 (X)]^m$ for any bounded domain
$X$ containing $\overline D$.

It is clear that any element $u\in H ^{-s}(D)$ extends up to an element
$U \in H ^{-s}({\mathbb R}^{m})$ via
\[
\langle U, v\rangle_{{\mathbb R}^{m}} =\langle u, v\rangle _{D}
\mbox{ for all } v \in H^{s} ({\mathbb R}^{m});
\]
here $\langle \cdot, \cdot \rangle_D$ is the pairing on $H \times H'$ for a space
$H$ of distributions over $D$. It is natural to denote it by $\chi_{D} u$.
The  defined in this way linear operator
$\chi_{D}: H ^{-s}(D) \to H ^{-s}({\mathbb R}^{m})$, $s  \in
{\mathbb R}_+$ is obviously bounded. Since the distribution  $\chi_D u $ is supported in
$\overline D$, the volume potential
(\ref{eq.VP}) induces the bounded linear operator
\[
\Phi \circ \chi_{D}  I_m: [H^{\varepsilon-1/2} (D)]^m \to [H^{\varepsilon+3/2} (X)]^m, \quad
0<\varepsilon \leq 1/2,
\]
for any bounded domain $X$ containing $\overline D$ (see, \cite{Agra90}).

Hence, the operators
\[
{\mathfrak D} \circ \Phi \circ \chi_{D} I_m:
[H^{\varepsilon-1/2} (D)]^m \to [H^{\varepsilon+1/2}
(X)]^k,
\]
\[
 \nu_ {\mathfrak D} \circ \Phi  \circ \chi_{D} I_m:
 [H^{\varepsilon-1/2} (D)]^m \to [H^{\varepsilon} (\partial D)]^m
\]
are bounded, too, if  $0<\varepsilon \leq 1/2$ because of the Trace Theorem for the Sobolev
spaces. Note that for $\varepsilon =0 $ this statement is not true because the elements of the
space $H^{1/2} (X)$ may have no traces on $\partial D \subset X$.

Now integrating by parts  we obtain for
$u \in [H^1 (D, S)]^m$ and $v \in [L^2 (D)]^m$:
\begin{equation} \label{eq.emb.half.1}
(v, u)_{[L^2(D
)]^m} = ({\mathcal L}_{\mathfrak D}  \Phi I_m v, u)_{[L^2(D)]^m} =
\left( {\mathfrak D} \Phi I_m v , {\mathfrak D}  u \right)_{[L^2 (D)]^k} +
(\nu_{\mathfrak D} \Phi I_m v, u)_{[L^2 (\partial D\setminus S)]^m}.
\end{equation}

Take  a sequence  $\{ v_\mu \} \subset [H^1 (D)]^m$, converging to $v$ in the space
$[H^{\varepsilon-1/2} (D)]^m$, $0 < \varepsilon < 1/2$.
As the space $H^{s} (D)$ is reflexive for each $s$, using (\ref{eq.emb.half.1})
and the continuity of the operators ${\mathfrak D} \circ \Phi  \circ \chi_{D} I_m$,
$\nu_{\mathfrak D} \circ \Phi  \circ \chi_{D} I_m$ above, we obtain for
$u \in [H^1 (D, S)]^m$:
\[
c \|u\|_{[H^{1/2-\varepsilon} (D)]^m} \leq
  \left\| {\mathfrak D} \circ \Phi  \circ \chi_{D} I_m \right\|
 \left\|   {\mathfrak D}  u \right\|_{[L^2 (D)]^k} +
 \left\|\nu _{\mathfrak D}
   \circ \Phi \circ \chi_{D} I_m \right\|
    \left\| u \right\|_{[L^2 (\partial D \setminus S)]^m}
\]
with a constant $c>0$ being independent on $u$.
Thus, there are constant $C_1>0$, $C_2>0$ such that
\[
 \|u\|_{[H^{1/2-\varepsilon} (D)]^m} \leq
   C_1 \ \|u\|_h \leq
   C_2 \ \|u\|_{+,\gamma} \mbox{ for all } u \in [H^1 (\overline D, S)]^m.
\]
This proves the continuous embedding
$H^{+,\gamma} (D) \hookrightarrow [H^{1/2 -\varepsilon} (D)]^m$ with any $\varepsilon>0$.

Due to the factorization, the operator ${\mathcal L}_{\mathfrak D} $ is strongly elliptic
formally-selfadjoint and the Dirichlet problem for it is Fredholm of index zero (see, for
instance, \cite{Agra11c}, \cite[Lemma 3.2]{SchShTa03}).  As we noted above, ${\mathcal L}
_{{\mathfrak D}^{(3)}} u =0$ in  $D$ for $u \in C^\infty _{0} (D)$ if and only if  $u \equiv 0$. 
Therefore the Dirichlet problem for it is uniquely solvable. Let now $G$ and $P$ stand for the 
Green function and the Poisson integral of the Dirichlet Problem for the
Dirichlet problem for the operator ${\mathcal L}_{\mathfrak D}$ in $D$. Then they
induce the bounded operators (see, for instance,
\cite{Agra11c}, \cite[Theorem 3.3]{SchShTa03})
\[
 G_1: [\tilde H^{-1} (D)]^m \to [H^1_0 (D)]^m, \quad P_1: [H^{1/2} (D)]^m \to [H^1 (D)]^m.
\]
As the operator ${\mathcal L}_{\mathfrak D}$ extends to the continuous linear operator
${\mathcal L}_{\mathfrak D}:  [H^1 (D)]^m \to [\tilde H^{-1} (D)]^m$ via
\[
\langle {\mathcal L}_{\mathfrak D} u , v \rangle = ({\mathfrak D} u, {\mathfrak D} v )_{[L^2
(D)]^m}, u \in [H^1 (D)]^m, v \in [H^1_0 (D)]^m ,
\]
then $u=P_1u + G_1 {\mathcal L}_{\mathfrak D} u$ for each $u \in [H^1 (D)]^m$. Hence, for
$u,v \in [H^1 (D,S)]^m$ we have:
\begin{equation} \label{eq.h}
(u, v)_h  = (P  u, P  v)_{[L^2 (\partial D\setminus S)]^m} + ({\mathfrak D} ^{(3)} u , {\mathfrak
D}^{(3)} v )_{[L^2 (D)]^k}.
\end{equation}
On the other hand, integrating by parts,  we obtain
\[
({\mathfrak D}^{(3)}  P_1  u , {\mathfrak D}^{(3)} G_1 {\mathcal L}_{{\mathfrak D}^{(3)}}
u )_{[L^2 (D)]^k} =0.
\]
That is why, for all $u \in [H^1 (\overline D,S)]^m$,
\[
C_2^2 C_1^{-2}\|u\|^2_{+,\gamma} \geq \|u \|^2_h  \geq
\|P_1  u\|^2_{[L^2 (\partial D\setminus S)]^m} +
\|{\mathfrak D}^{(3)} G_1 {\mathcal L}_0 u \|^2_{[L^2 (D)]^k}
\]
It follows  from(\ref{eq.Ga}) and  (\ref{eq.h}) that any  sequence
$\{u_\mu \} \subset [H^{1} (D,S)]^m$,  converging to  $u \in H^{+,\gamma}_ {{\mathfrak D}^{(3)}}
(D)$ in the space $H^{+,\gamma}_{{\mathfrak D}^{(3)}}(D)$ can be presented as
\[
u_\mu = P_1u_\mu + G _1 {\mathcal L}_{{\mathfrak D}^{(3)}} u_\mu
\]
where the sequence   $\{G _1{\mathcal L}_{{\mathfrak D}^{(3)}}  u_\mu \}$ converges in [$H^1_0
(D)]^m \subset [H^1 (D,S)]^m$ to an element  $w_1$.

Now the already proved part of the theorem yields that $\{P_1 u_\mu \}$ converges to an element
$w_2$ in $[H^{1/2-\varepsilon} (D)]^m$. This proves the continuous embedding  $H^{+,\gamma}
_{{\mathfrak D}^{(3)}}(D) \hookrightarrow [H^{1/2 -\varepsilon} (D)]^m$.

To finish the proof of the theorem one has to almost literally repeat the corresponding
arguments in the proof of \cite[Theorem 1]{PolkShla13}, related to the mixed problem
for the Laplace operator.
\end{proof}

\begin{corollary}
\label{c.SL.emb.half}
Let $\mu$, $\lambda \in C^\infty (X)$ and estimates (\ref{eq.a}),  (\ref{eq.b}) hold true. Then
$H^{+,\gamma} _{{\mathfrak D}^{(3)}} (D)$ is continuously embedded into
$[H^{1/2-\varepsilon,\gamma} (D)]^m$ for any $\varepsilon >0$.
\end{corollary}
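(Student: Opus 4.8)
The plan is to deduce Corollary \ref{c.SL.emb.half} from \thmref{t.emb.half} by reducing to the unweighted case $\rho\equiv 1$ via the substitution $u\mapsto \rho^{-\gamma}u$. First I would observe that, by the very definitions of the weighted spaces in \secref{s.3}, multiplication by $\rho^{-\gamma}$ is (formally) an isometry between weighted objects and unweighted ones: $(\rho^{-\gamma}u,\rho^{-\gamma}v)_{H^{0,\gamma}(D)}=(\rho^{-\gamma}u,\rho^{-\gamma}v)_{L^2(D)}\cdot$(the $\gamma=0$ normalization is built into the norm), and similarly the weighted Sobolev--Slobodetskii norm $\|\cdot\|_{H^{s,\gamma}(D)}$ was defined precisely so that $\|u\|_{H^{s,\gamma}(D)}\asymp\|\rho^{-\gamma}u\|_{H^{s}(D)}$ modulo lower-order terms controlled by $\rho\nabla_m\rho\in[L^\infty(D)]^m$. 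So the target embedding $H^{+,\gamma}_{{\mathfrak D}^{(3)}}(D)\hookrightarrow[H^{1/2-\varepsilon,\gamma}(D)]^m$ is equivalent to an unweighted embedding for the conjugated form.

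Next I would conjugate the form $(\cdot,\cdot)_{+,\gamma,{\mathfrak D}^{(3)}}$. Writing $w=\rho^{-\gamma}u$, one has ${\mathfrak D}^{(3)}u=\rho^{\gamma}{\mathfrak D}^{(3)}w+\gamma\rho^{\gamma-1}(\nabla_m\rho\otimes\cdots)w$, i.e. ${\mathfrak D}^{(3)}(\rho^\gamma w)=\rho^\gamma\big({\mathfrak D}^{(3)}w+\gamma\rho^{-1}(\text{matrix built from }\nabla_m\rho)\,w\big)$; since $\rho\nabla_m\rho\in[L^\infty(D)]^m$ the extra term is an operator of order zero with coefficients bounded times $\rho^{-1}$, hence $\|\rho^{-\gamma}{\mathfrak D}^{(3)}u\|_{[L^2(D)]^k}$ is comparable to $\|{\mathfrak D}^{(3)}w\|_{[L^2(D)]^k}$ plus a term dominated by $\|w\|_{[L^2(D)]^m}$, which in turn is absorbed by the $a_{0,0}$-part of the form thanks to (\ref{eq.a}). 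Likewise the boundary term becomes a genuinely positive $b_1^{-1}b_{0,0}$-term in the unweighted norm of the trace of $w$, since $\rho b_1^{-1}b_{0,0}\in L^\infty(\partial D\setminus S)$ and (\ref{eq.b}) holds. Thus the conjugated norm $\|u\|_{+,\gamma,{\mathfrak D}^{(3)}}$ is equivalent to $\|w\|_{+,0,\widetilde{{\mathfrak D}}^{(3)}}$ for a form of exactly the type covered by \thmref{t.emb.half} with $\rho\equiv1$ (the perturbed zero-order coefficients still satisfy $\widetilde a_{0,0}\ge q'I_m$ and the perturbed boundary coefficients still satisfy $\widetilde b_1^{-1}\widetilde b_{0,0}\ge c_1'I_m$).

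Then I would apply \thmref{t.emb.half}, part 2): under (\ref{eq.a}) and (\ref{eq.b}) with $\rho\equiv1$, the conjugated space is continuously embedded into $[H^{1/2-\varepsilon}(D)]^m$ for every $\varepsilon>0$. Transporting back by multiplication with $\rho^{\gamma}$ and invoking the isometry-up-to-lower-order between $H^{1/2-\varepsilon}$ and $H^{1/2-\varepsilon,\gamma}$ recorded in the definitions of \secref{s.3} (and in \lemref{eq.l.emb.s.gamma}), we obtain $H^{+,\gamma}_{{\mathfrak D}^{(3)}}(D)\hookrightarrow[H^{1/2-\varepsilon,\gamma}(D)]^m$ continuously, as claimed. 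To make the argument rigorous on completions rather than on the dense subspace $[H^1(D,S\cup Y)]^m$, I would first establish all estimates for $u\in C^1(\overline D,S\cup Y)^m$ (where $\rho^{-\gamma}u$ is legitimate since $\rho>0$ away from $Y$ and $u$ vanishes near $Y$), then pass to the limit using that both sides are norms on the respective completions.

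The main obstacle I expect is precisely the bookkeeping near the singular set $Y$: the conjugation $u\mapsto\rho^{-\gamma}u$ is not an exact isometry, the commutator $[{\mathfrak D}^{(3)},\rho^{-\gamma}]$ produces a term with a $\rho^{-1}$ singularity, and one must verify carefully that this term is genuinely absorbed by the $\rho^{-2}a_{0,0}$-part of the weighted form (so that hypothesis (\ref{eq.a}), stated as $\rho^2a_{0,0}\ge qI_m$, is exactly what is needed) and that the boundary contribution behaves correctly on $\partial D\setminus S$ where $\rho$ may still vanish on $Y\subset\partial S$. Everything else is a routine transcription of the $\rho\equiv1$ result, so once the commutator estimate and the density argument near $Y$ are in place, the corollary follows.
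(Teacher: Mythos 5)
Your argument is correct and coincides with the route the paper (implicitly) takes: Corollary \ref{c.SL.emb.half} is stated without a printed proof precisely because it follows from Theorem \ref{t.emb.half} by the conjugation $u\mapsto\rho^{-\gamma}u$ --- the same device the authors invoke in the proof of Theorem \ref{t.L0.selfadj} --- with hypothesis (\ref{eq.a}) serving exactly to absorb the commutator term $\gamma\rho^{-1}({\mathfrak D}^{(3)}\rho)\,w$ and the $\rho^{-\gamma-s}$-weighted $L^{2}$ part of the target norm. One minor correction: the paper assumes $\partial\rho/\partial x_{j}\in L^{\infty}(D)$, not merely $\rho\nabla_{m}\rho\in[L^{\infty}(D)]^{m}$, and it is this stronger hypothesis that makes the commutator $O(\rho^{-1})$ with bounded numerator and hence dominated by the $a_{0,0}$-term.
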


The embeddings, described in Theorem  \ref{t.emb.half} and Corollary \ref{c.SL.emb.half},
are rather sharp on the scale of the Sobolev-Slobodetskii spaces (see Example \ref{ex.d3} below).

\begin{remark} \label{r.A.D}
Lemma \ref{l.factor.1}, Theorem  \ref{t.emb.half} and Corollary \ref{c.SL.emb.half} imply that
in the spaces  $H^{+,\gamma} _{{\mathfrak D}^{(1)}} (D)$ and  $H^{+,\gamma} _{{\mathfrak
D}^{(2)}} (D)$ we can use arbitrary first order perturbations $a_1 \nabla_m \otimes I_m$ in
(\ref{eq.A.D}) with $\rho a_1^{(p,q)}\in L^{\infty} (D)$ while for the operators ${\mathfrak
D}^{(3)}$ only the summands of the type $\tilde a_1 (x) {\mathfrak D}^{(3)}$, where  $\tilde a_1$
is a $(m\times k_3)$-matrix with entries $\tilde a_1^{(p,q)}$  satisfying $\rho \tilde
a_1^{(p,q)}\in L^{\infty} (D)$, can be used.
\end{remark}

Now we proceed with the generalized formulation of the Sturm-Liouville Problem.
With this aim, we assume that $H^{+,\gamma}_{\mathfrak D} (D)=H^+$ is continuously
embedded into $H^{0,\gamma} (D)=H^0$ (the corresponding conditions were described above)
and we denote by $H^{-,\gamma}_{\mathfrak D} (D)=H^-$ the completeness of the space  $[H^1
(D,S)]^m$ with respect to the corresponding negative norm  $\| u \|_{-,\gamma,{\mathfrak D}}$.
The pairing, described in Lemma \ref{l.dual}, will be denoted by
$\langle \cdot , \cdot \rangle_\gamma$.

Further, on integrating by parts we see that
\[
   (Au, v)_{[H^{0,\gamma} (D)]^m}
 = ({\mathfrak D}u, {\mathfrak D}v)_{[H^{0,\gamma} (D)]^k}
 + \left( b_1^{-1} (b_0 + \partial_\tau)  u, v \right)_{[H^{0,\gamma} (\partial D \setminus S)]^m}
 + 
\]
\[
 \Big( a_1 \nabla_m \otimes I_m u - 2\gamma\rho^{-1} ({\mathfrak D} \rho)^*  {\mathfrak D}u  +
 a_0 u, v  \Big)_{[H^{0,\gamma} (D)]^m}
\]
for all  $u \in [H^2 (D, S)]^m$ and $v \in [H^1 (D,S)]^m$, satisfying the boundary condition of
(\ref{eq.SL}); here ${\mathfrak D}  \rho $ stands for the functional matrix
$\sum_{j=1}^m {\mathfrak D}  _j \frac{\partial \rho}{\partial x_j}$.
Suppose that (cf. Remark \ref{r.A.D})
\begin{equation}
\label{eq.positive.part1}
\Big|   \left( b_1^{-1} (\delta b_0 + \partial_\tau) u, v \right)_{[L^2 (\partial D
\setminus S)]^m} + \Big(  a_{1}  \nabla_m \otimes I_m  u + \delta a_0\, u, v  \Big)_{[L^2 (D)]^m}
 \Big| \leq   c\, \| u \|_{+,\gamma, {\mathfrak D}} \| v \|_{+,\gamma, {\mathfrak D}}
\end{equation}
for all $u, v \in  [H^1 (D,S\cup S)]^m$, with a positive constant  $c$ being
independent of $u$ and $v$.

Under condition (\ref{eq.positive.part1}), for each fixed
$u \in H^{+,\gamma}_{\mathfrak D} (D)$ the sesquilinear form
\[
Q (u, v) = ({\mathfrak D}u, {\mathfrak D}v)_{[H^{0,\gamma} (D)]^l}
 + \left( b_1^{-1} b_0  u, v \right)_{[H^{0,\gamma} (\partial D \setminus S)]^m} +
\]
\[
 \Big(  a_1 \nabla_m \otimes I_m u- 2\gamma\rho^{-1} ({\mathfrak D}  \rho)^* {\mathfrak D}u  +
 a_0 u, v  \Big)_{[H^{0,\gamma}  (D)]^k}
\]
determines a continuous linear functional $f$ on $H^{+,\gamma} (D)$ via the
equality $f (v) := \overline{Q (u,v)}$ for $v \in H^{+,\gamma} (D)$. By Lemma
\ref{l.dual}, there is a unique element $Lu$ in $H^{-,\gamma} (D)$ such that
\[
   f (v) = \langle v,Lu \rangle _{\gamma}
\]
for all $v \in H^{+,\gamma} (D)$. We have thus defined a linear operator
$L : H^{+,\gamma} (D) \to H^{-,\gamma} (D)$. It follows from
(\ref{eq.positive.part1}) that the operator $L$ is bounded. The bounded linear operator
$L_0 : H^{+,\gamma} (D) \to H^{-,\gamma} (D)$ defined in this way via the sesquilinear
form  $(\cdot,\cdot)_{+,\gamma}$, i.e.,
\begin{equation}
\label{eq.L0}
   (v,u)_{+,\gamma,_{\mathfrak D}} = \langle v, L_0 u \rangle _\gamma
\end{equation}
for all $u, v \in H_{+,\gamma} (D)$, corresponds to the case $a_1 =  \rho^{-1}{\mathfrak D} ^*
\rho $,    $a_0 = a_{0,0}$ and  $b_0 = b_{0,0}$.

Thus, the generalized setting of the problem (\ref{eq.SL}) in the weighted spaces is the
following: given $f \in H^{-,\gamma} (D)$ find $u \in H^{+,\gamma} (D)$ such that
\begin{equation}
\label{eq.SL.w}
\overline{Q (u,v)} = \langle v,f \rangle_\gamma \mbox{ for all } v \in H^{+,\gamma} (D).
\end{equation}

The problem (\ref{eq.SL.w}) can be investigated by the standard methods of functional analysis
\cite[Ch.~3, \S \S~4--6]{LadyUral73}) that are similar to the coercive case.

\begin{lemma}
\label{l.solv.SL1}
Suppose that  $H^{+,\gamma}_{\mathfrak D} (D)$ is continuously embedded into $H^{0,\gamma} (D)$,
$a_1 = 2\gamma\rho^{-1}({\mathfrak D} \rho)^* $,  $\delta a_0 = 0$ and  $\delta b_0 = 0$. Then
for each $f \in  H^{-,\gamma} _{\mathfrak D}(D)$ there is a unique solution $u \in H^{+,\gamma}
_{\mathfrak D}(D) $ to problem (\ref{eq.SL.w}), i.e., the operator $L_0: H^{+,\gamma}_{\mathfrak
D} (D) \to H^{-,\gamma} _{\mathfrak D} (D)$ is continuously invertible. Moreover,
the norms of the operators $L_0$ and $L_0^{-1}$ equal to $1$.
\end{lemma}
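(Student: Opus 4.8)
The plan is to reduce the statement to the Riesz representation theorem for the Hilbert space $H^{+,\gamma}_{\mathfrak D}(D)$. The first step is to observe that under the three stated hypotheses the sesquilinear form $Q$ degenerates to the inner product $(\cdot,\cdot)_{+,\gamma,{\mathfrak D}}$ itself: the choice $a_1 = 2\gamma\rho^{-1}({\mathfrak D}\rho)^*$ makes the first-order summand $a_1\nabla_m\otimes I_m u$ cancel the term $-2\gamma\rho^{-1}({\mathfrak D}\rho)^*{\mathfrak D}u$ in the definition of $Q$, while $\delta a_0 = 0$ and $\delta b_0 = 0$ replace $a_0$ by $a_{0,0}$ and $b_0$ by $b_{0,0}$. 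What is left is exactly $({\mathfrak D}u,{\mathfrak D}v)_{[H^{0,\gamma}(D)]^k} + (b_1^{-1}b_{0,0}u,v)_{[H^{0,\gamma}(\partial D\setminus S)]^m} + (a_{0,0}u,v)_{[H^{0,\gamma}(D)]^m}$, i.e.\ $(u,v)_{+,\gamma,{\mathfrak D}}$. Hence (\ref{eq.SL.w}) becomes $\overline{(u,v)_{+,\gamma,{\mathfrak D}}} = \langle v,f\rangle_\gamma$ for all $v$, equivalently $(v,u)_{+,\gamma,{\mathfrak D}} = \langle v,f\rangle_\gamma$ for all $v$, which by the defining identity (\ref{eq.L0}) of $L_0$ is precisely the equation $L_0 u = f$.

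The second step is to solve this equation. Since one of the conditions of \lemref{l.factor} is in force (so that $(\cdot,\cdot)_{+,\gamma,{\mathfrak D}}$ is a genuine inner product), $H^{+,\gamma}_{\mathfrak D}(D)$ is a Hilbert space; by \lemref{l.dual} its dual is $H^{-,\gamma}_{\mathfrak D}(D)$ with duality pairing $\langle\cdot,\cdot\rangle_\gamma$. Given $f\in H^{-,\gamma}_{\mathfrak D}(D)$, the functional $v\mapsto\langle v,f\rangle_\gamma$ is linear and bounded on $H^{+,\gamma}_{\mathfrak D}(D)$, so the Riesz representation theorem yields a unique $u\in H^{+,\gamma}_{\mathfrak D}(D)$ with $(v,u)_{+,\gamma,{\mathfrak D}} = \langle v,f\rangle_\gamma$ for all $v$; this is the unique solution of (\ref{eq.SL.w}), equivalently $L_0 u = f$. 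Thus $L_0$ is a bijection of $H^{+,\gamma}_{\mathfrak D}(D)$ onto $H^{-,\gamma}_{\mathfrak D}(D)$, and it is continuous by the remarks preceding the lemma.

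The last step is the norm identity. For arbitrary $u\in H^{+,\gamma}_{\mathfrak D}(D)$, combining the definition of $\|\cdot\|_{-,\gamma,{\mathfrak D}}$ with (\ref{eq.L0}) gives
\[
\|L_0 u\|_{-,\gamma,{\mathfrak D}} = \sup_{v\neq 0}\frac{|\langle v,L_0 u\rangle_\gamma|}{\|v\|_{+,\gamma,{\mathfrak D}}} = \sup_{v\neq 0}\frac{|(v,u)_{+,\gamma,{\mathfrak D}}|}{\|v\|_{+,\gamma,{\mathfrak D}}} = \|u\|_{+,\gamma,{\mathfrak D}},
\]
the last equality by the Cauchy--Schwarz inequality and the admissible choice $v=u$. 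Hence $L_0$ is a surjective linear isometry, so $\|L_0\| = \|L_0^{-1}\| = 1$ (the space $H^{+,\gamma}_{\mathfrak D}(D)$ being nontrivial under the hypotheses of \lemref{l.factor}).

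I do not expect a genuine obstacle: the assertion is a weighted-space version of the Lax--Milgram/Riesz lemma. The only places calling for care are the algebraic verification that the first-order term of $Q$ is annihilated by the prescribed $a_1$, and the correct use of \lemref{l.dual} both for the duality $H^{-,\gamma}_{\mathfrak D}(D)\cong (H^{+,\gamma}_{\mathfrak D}(D))'$ and for the norm computation.
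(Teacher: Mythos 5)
Your proof is correct and is exactly the standard Riesz-representation argument the paper alludes to (it states the lemma without proof, referring to "standard methods of functional analysis" as in the coercive case): under the stated choices of $a_1$, $\delta a_0$, $\delta b_0$ the form $Q$ collapses to $(\cdot,\cdot)_{+,\gamma,{\mathfrak D}}$, so $L=L_0$, and Lemma~\ref{l.dual} together with the Riesz theorem gives unique solvability and the isometry property. No issues.
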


The following three lemmas describe bounded and compact perturbations of the operator $L_0$.

\begin{lemma}
\label{l.bound.comp.1} Let $H^{+,\gamma}_{\mathfrak D} (D)$ be continuously embedded into
$H^{s,\gamma} (D)$, $0<s\leq 1$. If $\rho a_1 \in L^\infty (D)$, $\rho ^2 \delta a_0
\in L^\infty (D)$ then the corresponding summands in problem (\ref{eq.SL.w}) induce bounded
operators, acting from $H^{+,\gamma}_{\mathfrak D} (D)$ to $H^{-,\gamma} _{\mathfrak D} (D)$.
Moreover, if there is $\varepsilon >0$ such that $\rho^{2-\varepsilon} \delta a_0 \in L^\infty
(D)$,   $\rho^{1-\varepsilon} \delta a_1 \in L^\infty (D)$ then the corresponding summands in
problem \ref{eq.SL.w}) induce compact operators, acting from from $H^{+,\gamma}_{\mathfrak D}
(D)$ to $H^{-,\gamma} _{\mathfrak D} (D)$.
\end{lemma}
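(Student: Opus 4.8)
The plan is to treat the two zero-order summands and the first-order summand separately, reducing each to a bounded (resp. compact) sesquilinear form estimate via the duality of Lemma~\ref{l.dual}, then invoking the compact embedding of Lemma~\ref{eq.l.emb.s.gamma}. Recall that a summand $T$ in problem~(\ref{eq.SL.w}) induces a bounded operator $H^{+,\gamma}_{\mathfrak D}(D)\to H^{-,\gamma}_{\mathfrak D}(D)$ precisely when the associated form $(u,v)\mapsto t(u,v)$ satisfies $|t(u,v)|\le c\,\|u\|_{+,\gamma,{\mathfrak D}}\|v\|_{+,\gamma,{\mathfrak D}}$, and a \emph{compact} operator when, in addition, $t$ factors through a space into which $H^{+,\gamma}_{\mathfrak D}(D)$ embeds compactly. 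The hypothesis $H^{+,\gamma}_{\mathfrak D}(D)\hookrightarrow H^{s,\gamma}(D)$ with $0<s\le 1$ is exactly what is needed to run these estimates, since it gives control of $\|\rho^{-\gamma-s}u\|$ and, when $s>1/2$, of the trace via Lemma~\ref{eq.l.emb.s.gamma}.

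First I would handle the zero-order term $\delta a_0\, u$. Writing the form as $(\delta a_0 u,v)_{[H^{0,\gamma}(D)]^m}=(\rho^2\delta a_0\cdot\rho^{-\gamma}u,\rho^{-\gamma}v)_{[L^2(D)]^m}$, the assumption $\rho^2\delta a_0\in L^\infty(D)$ and the embedding $H^{+,\gamma}_{\mathfrak D}(D)\hookrightarrow H^{0,\gamma}(D)$ (so $\|\rho^{-\gamma}u\|_{[L^2(D)]^m}\le C\|u\|_{+,\gamma,{\mathfrak D}}$) give the boundedness bound at once. For the first-order term $a_1\nabla_m\otimes I_m\,u$, I would write $(a_1\nabla_m\otimes I_m u,v)=(\rho a_1\cdot\rho^{1-\gamma-1}\nabla_m u,\rho^{-\gamma}v)_{[L^2(D)]^m}$; now $\rho a_1\in L^\infty(D)$ together with $H^{+,\gamma}_{\mathfrak D}(D)\hookrightarrow H^{1,\gamma}(D)$ (the relevant case $s=1$, i.e. control of $\|\rho^{1-\gamma-1}\nabla_m u\|_{[L^2(D)]^m}$) and again the $H^{0,\gamma}$-embedding for $v$ yields the bound. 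Taking the supremum over $v\ne 0$ with $\|v\|_{+,\gamma,{\mathfrak D}}=1$ and using Lemma~\ref{l.dual} converts these form estimates into the asserted operator bounds $H^{+,\gamma}_{\mathfrak D}(D)\to H^{-,\gamma}_{\mathfrak D}(D)$.

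For the compactness statement, the extra gain $\rho^{2-\varepsilon}\delta a_0\in L^\infty(D)$ lets me factor the zero-order form through $H^{s',\gamma}(D)$ for a suitable $s'$ with $-1\le s'<s\le 1$: indeed $(\delta a_0 u,v)=(\rho^{2-\varepsilon}\delta a_0\cdot\rho^{\varepsilon-2}u,v)_{[H^{0,\gamma}(D)]^m}$, and $\rho^{\varepsilon-2}u$ in the $L^{2,\gamma}$ pairing is controlled by the $H^{s',\gamma}(D)$-norm of $u$ for $s'$ chosen small enough (negative), so the induced operator factors as $H^{+,\gamma}_{\mathfrak D}(D)\to H^{s',\gamma}(D)\to H^{-,\gamma}_{\mathfrak D}(D)$ with the first arrow compact by Lemma~\ref{eq.l.emb.s.gamma}. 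Similarly $\rho^{1-\varepsilon}\delta a_1\in L^\infty(D)$ lets the first-order form be absorbed, up to a compact remainder, after the same kind of weight bookkeeping — here one uses that a fractional loss of one derivative combined with the $\varepsilon$-gain in the weight still lands strictly inside the scale. I expect the main obstacle to be precisely this bookkeeping: choosing the intermediate smoothness index $s'$ and matching the powers of $\rho$ so that the factorization through a \emph{strictly} smaller space in the scale $H^{\bullet,\gamma}(D)$ is legitimate, particularly for the first-order term where one is juggling a derivative against the $\varepsilon$ of extra weight decay; everything else is a routine application of Cauchy--Schwarz, the stated embeddings, and Lemma~\ref{l.dual}.
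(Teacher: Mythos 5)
The paper's own proof of this lemma is a single sentence (``It follows from Lemma~\ref{eq.l.emb.s.gamma}''), and your overall strategy --- estimate the sesquilinear forms by Cauchy--Schwarz, convert to operator bounds via Lemma~\ref{l.dual}, and get compactness from the compact embedding of Lemma~\ref{eq.l.emb.s.gamma} under the $\varepsilon$-improved weight conditions --- is exactly the intended route. So the architecture is right.

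The genuine gap is in the weight bookkeeping, and it is not a cosmetic one: your explicit splittings are off by powers of $\rho$, and as a consequence you invoke the wrong embedding. By the definition of the weighted inner products, $(\delta a_0 u,v)_{[H^{0,\gamma}(D)]^m}=\int_D \rho^{-2\gamma}\,\delta a_0 u\cdot\overline{v}\,dx$, so the correct factorization is
\[
(\delta a_0 u,v)_{[H^{0,\gamma}(D)]^m}
=\bigl(\rho^{2}\delta a_0\cdot\rho^{-\gamma-1}u,\ \rho^{-\gamma-1}v\bigr)_{[L^2(D)]^m},
\]
not $\bigl(\rho^{2}\delta a_0\cdot\rho^{-\gamma}u,\rho^{-\gamma}v\bigr)$ as you wrote: your version loses a factor $\rho^{-2}$, which is unbounded near $Y$. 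The correct identity shows that what you need on each factor is the weight $\rho^{-\gamma-1}$, i.e.\ the zero-order part of the $H^{1,\gamma}(D)$-norm (this is precisely how the hypothesis $\rho^{2}\delta a_0\in L^\infty(D)$ is calibrated), whereas you claim the bound follows ``at once'' from the embedding into $H^{0,\gamma}(D)$, which only controls $\|\rho^{-\gamma}u\|_{L^2}$. The same slip occurs in the first-order term: $\int_D\rho^{-2\gamma}(a_1\nabla_m\otimes I_m u)\cdot\overline{v}\,dx=(\rho a_1\cdot\rho^{-\gamma}\nabla_m u,\ \rho^{-\gamma-1}v)_{[L^2(D)]^m}$, so the $v$-factor again carries $\rho^{-\gamma-1}$, not $\rho^{-\gamma}$. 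As written, your estimates hold only when $\rho\equiv 1$. The fix is to pair both factors against the $H^{1,\gamma}$-weights and use the embedding $H^{+,\gamma}_{\mathfrak D}(D)\hookrightarrow H^{1,\gamma}(D)$ (the case $s=1$ of the hypothesis); for the compactness part the $\varepsilon$-gain should then be distributed as $\rho^{-2\gamma}=\rho^{2-\varepsilon}\cdot\rho^{-\gamma-1+\varepsilon/2}\cdot\rho^{-\gamma-1+\varepsilon/2}$ and combined with Lemma~\ref{eq.l.emb.s.gamma} (plus an interpolation of the weights), rather than by the somewhat vague factorization through a negative-order space that you sketch.
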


\begin{proof} It follows from Lemma \ref{eq.l.emb.s.gamma}.
\end{proof}

In the coercive case (corresponding to the operators ${\mathfrak D}^{(1)}$, ${\mathfrak
D}^{(2)}$) we can enlarge the class of the perturbations. With this purpose we fix a basis $\{
t_j\}_{j=1}^{m-1}$ among the tangential vectors (with bounded integrable components). For instance, this may be formed by the vectors
\begin{equation} \label{eq.basis}
\vec{e}_j \nu_i - \vec{e}_i \nu_j, \quad i>j.
\end{equation}
Then
$\partial _\tau = \sum_{j=1}^{m-1} d_j (x)\partial_{ t_j}$ with $(m\times m)$-matrices
$d_j (x)$.

\begin{lemma}
\label{l.bound.comp.2} Let $j=1$ or $j=2$. Let (\ref{eq.a}) hold or $\rho\equiv 1$. If
$\rho b_1^{-1}\delta b_0 \in L^\infty (\partial D\setminus S)$  then the corresponding
summand in problem (\ref{eq.SL.w}) induces a bounded operator, acting from $H^{+,\gamma}
_{{\mathfrak D}^{(j)}} (D)$ to $H^{-,\gamma} _{{\mathfrak D}^{(j)}}  (D)$.  if there is
$\varepsilon >0$ such that $\rho^{1-\varepsilon} b_1^{-1}\delta b_0 \in L^\infty (\partial
 D\setminus S)$ then the corresponding summand in problem (\ref{eq.SL.w}) induces a compact
 operator, acting from $H^{+,\gamma}_{{\mathfrak D}^{(j)}} (D) \to H^{-,\gamma} _{{\mathfrak
 D}^{(j)}}  (D)$. Moreover, if $b_1^{-1}d_j \in C^{0,\alpha} (\partial D \setminus S) $, $1\leq j
 \leq m-1$,  $1/2<\alpha \leq 1$ then the matrix $\partial _\tau$ of tangential derivatives
 induces a  bounded operator, acting from $H^{+,\gamma}_{{\mathfrak D}^{(j)}} (D)$ to
 $H^{-,\gamma}  _{{\mathfrak  D}^{(j)}}  (D)$ with the norm estimated via
 $\|b_1^{-1}d_j\|_{C^{0,\alpha}  (\partial D\setminus S)}$,  $1\leq j \leq m-1$.
\end{lemma}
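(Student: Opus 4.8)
The plan is to treat the three summands in turn, each time reducing the claim to the already established embeddings and to Lemma \ref{eq.l.emb.s.gamma}. Under the hypotheses of the lemma ($j=1$ or $j=2$, and either (\ref{eq.a}) or $\rho\equiv1$), Lemma \ref{l.factor.1} tells us that $H^{+,\gamma}_{{\mathfrak D}^{(j)}}(D)$ is continuously embedded into $[H^{1,\gamma}(D)]^m$, and hence, by the trace part of Lemma \ref{eq.l.emb.s.gamma}, the trace operator maps $H^{+,\gamma}_{{\mathfrak D}^{(j)}}(D)$ boundedly into $[H^{1/2,\gamma}(\partial D\setminus S)]^m$. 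This is the crucial gain over the non-coercive case ${\mathfrak D}^{(3)}$ and is what makes the boundary summands tractable.

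First I would handle $b_1^{-1}\delta b_0$. By Cauchy--Schwarz on $[H^{0,\gamma}(\partial D\setminus S)]^m$, the form $(b_1^{-1}\delta b_0\,u,v)_{[H^{0,\gamma}(\partial D\setminus S)]^m}$ is controlled by $\|\rho^{-\gamma}b_1^{-1}\delta b_0\,u\|_{[L^2(\partial D\setminus S)]^m}\,\|\rho^{-\gamma}v\|_{[L^2(\partial D\setminus S)]^m}$; writing $\rho^{-\gamma}b_1^{-1}\delta b_0\,u = (\rho\,b_1^{-1}\delta b_0)\,(\rho^{-1/2-\gamma}u)\,\rho^{-1/2}$ and using that the trace lands in $H^{1/2,\gamma}$ (whose norm dominates the $\rho^{-\gamma-1/2}$-weighted $L^2$ norm by definition of the weighted spaces) gives the bound $\le c\|u\|_{+,\gamma,{\mathfrak D}^{(j)}}\|v\|_{+,\gamma,{\mathfrak D}^{(j)}}$ when $\rho b_1^{-1}\delta b_0\in L^\infty(\partial D\setminus S)$; this furnishes the bounded operator into $H^{-,\gamma}_{{\mathfrak D}^{(j)}}(D)$ via Lemma \ref{l.dual}. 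For compactness, the extra factor $\rho^{\varepsilon}$ in the hypothesis $\rho^{1-\varepsilon}b_1^{-1}\delta b_0\in L^\infty$ lets one factor the operator through the \emph{compact} embedding $H^{+,\gamma}_{{\mathfrak D}^{(j)}}(D)\hookrightarrow [H^{1/2-\varepsilon/2,\gamma}(D)]^m\to [H^{1/2-\varepsilon/2,\gamma}(\partial D\setminus S \text{-side})]$ of Lemma \ref{eq.l.emb.s.gamma}; the loss of smoothness is absorbed by the gained power of $\rho$, and the target $H^{-,\gamma}$ inherits compactness from the compact inclusion $H^{0,\gamma}\hookrightarrow H^{-,\gamma}$ in Lemma \ref{l.dual}.

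The third assertion, concerning $\partial_\tau=\sum_{j=1}^{m-1}d_j(x)\partial_{t_j}$, is the one I expect to be the main obstacle, since $\partial_\tau$ does not lower the order and cannot be bounded merely by the trace estimate; one must integrate the tangential derivative by parts along $\partial D$. The idea is to write, for $u,v\in[H^1(D,S)]^m$,
\[
(b_1^{-1}\partial_\tau u,v)_{[H^{0,\gamma}(\partial D\setminus S)]^m}
=\sum_{j=1}^{m-1}\Big(\rho^{-2\gamma}\,b_1^{-1}d_j\,\partial_{t_j}u,\,v\Big)_{[L^2(\partial D\setminus S)]^m},
\]
and move $\partial_{t_j}$ onto $v$; since $t_j$ is tangential and $u,v$ vanish near $S\cup Y$, there is no boundary term on $\partial S$, and the price is the commutator term involving $\partial_{t_j}(\rho^{-2\gamma}b_1^{-1}d_j)$ together with the Hölder-norm estimate of $b_1^{-1}d_j$. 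Here the hypothesis $b_1^{-1}d_j\in C^{0,\alpha}(\partial D\setminus S)$ with $1/2<\alpha\le1$ is exactly what is needed: splitting $\partial_{t_j}u$ and $\partial_{t_j}v$ symmetrically, one lands in $[H^{1/2,\gamma}(\partial D\setminus S)]^m$ for each factor (which is available by the trace theorem), while the coefficient $b_1^{-1}d_j$, being $C^{0,\alpha}$ with $\alpha>1/2$, acts as a bounded multiplier on $H^{1/2}$ of the boundary; this yields
\[
\big|(b_1^{-1}\partial_\tau u,v)_{[H^{0,\gamma}(\partial D\setminus S)]^m}\big|
\le c\,\max_{1\le j\le m-1}\|b_1^{-1}d_j\|_{C^{0,\alpha}(\partial D\setminus S)}\;\|u\|_{+,\gamma,{\mathfrak D}^{(j)}}\,\|v\|_{+,\gamma,{\mathfrak D}^{(j)}},
\]
and hence the claimed bounded operator from $H^{+,\gamma}_{{\mathfrak D}^{(j)}}(D)$ to $H^{-,\gamma}_{{\mathfrak D}^{(j)}}(D)$ with norm estimated by the stated Hölder norms. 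The routine points I would not spell out in detail are the density of $[H^1(D,S)]^m$ used to justify the integration by parts, the precise distribution of the weight $\rho^{-2\gamma}$, and the verification that multiplication by a $C^{0,\alpha}$ function with $\alpha>1/2$ is bounded on $H^{1/2}(\partial D\setminus S)$ — all standard facts about Sobolev--Slobodetskii spaces on Lipschitz surfaces.
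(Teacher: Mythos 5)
Your proposal is essentially correct and follows the same route as the paper, which, however, gives almost no details: for the summand $b_1^{-1}\delta b_0$ the paper simply invokes Lemma \ref{l.factor.1}, the Sobolev embedding theorem and the trace operator $tr: H^{1,\gamma}(D)\to H^{1/2,\gamma}(\partial D)$ of Lemma \ref{eq.l.emb.s.gamma} --- exactly the chain $H^{+,\gamma}_{{\mathfrak D}^{(j)}}(D)\hookrightarrow [H^{1,\gamma}(D)]^m \to [H^{1/2,\gamma}(\partial D)]^m$ that you use, with the compact case obtained by routing through $[H^{1-\varepsilon',\gamma}(D)]^m$ and absorbing the loss of smoothness into the gained power of $\rho$. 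For the tangential term the paper outsources the argument entirely to \cite[Lemma 6.6]{TarShla13a}; your reconstruction identifies the correct mechanism, namely that the single tangential derivative is split evenly between $u$ and $v$ (equivalently, $\partial_{t_j}\colon H^{1/2}(\partial D\setminus S)\to H^{-1/2}(\partial D\setminus S)$ is bounded and the $L^2$ pairing extends to the $H^{-1/2}\times H^{1/2}$ duality), combined with the fact that multiplication by a $C^{0,\alpha}$ function with $\alpha>1/2$ is a bounded multiplier on $H^{\pm 1/2}$ of the boundary. One caveat: the intermediate step in which you integrate by parts along $\partial D\setminus S$ and collect ``a commutator term involving $\partial_{t_j}(\rho^{-2\gamma}b_1^{-1}d_j)$'' is not literally available, since $b_1^{-1}d_j$ is only H\"older continuous and cannot be differentiated; the duality/multiplier argument must be applied directly to the pairing of $\partial_{t_j}u$ with $\rho^{-2\gamma}(b_1^{-1}d_j)^{*}v$ rather than after a genuine integration by parts. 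With that correction your plan coincides with the argument of \cite[Lemma 6.6]{TarShla13a} that the paper cites.
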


\begin{proof} The continuity and the compactness of the operators induced by
the summand  $b_1^{-1} \delta b_0$ follows from Lemma \ref{l.factor.1}, the Embedding Theorem for
Sobolev spaces and the continuity of the trace operator $tr: H^{1,\gamma} (D) \to H^{1/2,\gamma}
(\partial D)$ (see Lemma \ref{eq.l.emb.s.gamma}).

In order to finish the proof of the continuity of the tangential operator
one has to almost literally repeat the corresponding arguments in the proof
of \cite[Lemma 6.6]{TarShla13a}, related to the similar mixed problem
for the scalar differential operators.
\end{proof}

\begin{lemma}
\label{l.bound.comp.3} Let inequality  (\ref{eq.b}) be fulfilled and $b_1^{-1}\delta b_0
\in L^\infty (\partial D\setminus S)$. If (\ref{eq.a}) is true or $\rho \equiv 1$ then
the corresponding summand in problem (\ref{eq.SL.w}) induces a bounded  operator, acting from
$H^{+,\gamma}_{{\mathfrak D}^{(3)}} (D)$ to $H^{-,\gamma} _{{\mathfrak D}^{(3)}}  (D)$.
\end{lemma}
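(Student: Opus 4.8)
\textbf{Proof proposal for Lemma \ref{l.bound.comp.3}.}
The plan is to estimate the sesquilinear form
$\left( b_1^{-1}(b_{0,0}+\delta b_0)u, v\right)_{[H^{0,\gamma}(\partial D\setminus S)]^m}$
on $[H^1(D,S)]^m$ by the product
$\|u\|_{+,\gamma,{\mathfrak D}^{(3)}}\|v\|_{+,\gamma,{\mathfrak D}^{(3)}}$,
so that the standard construction preceding \eqref{eq.L0} produces a bounded operator
$H^{+,\gamma}_{{\mathfrak D}^{(3)}}(D)\to H^{-,\gamma}_{{\mathfrak D}^{(3)}}(D)$
after passing to the completion. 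The key point is that, unlike the coercive cases treated in Lemma \ref{l.bound.comp.2}, here we cannot rely on the embedding into $[H^{1,\gamma}(D)]^m$ and the usual trace theorem; instead we must use the weaker embedding supplied by Theorem \ref{t.emb.half} (and Corollary \ref{c.SL.emb.half}), which under hypothesis \eqref{eq.b} gives $H^{+,\gamma}_{{\mathfrak D}^{(3)}}(D)\hookrightarrow [H^{1/2-\varepsilon,\gamma}(D)]^m$ for every $\varepsilon>0$, together with the fact that the very norm $\|\cdot\|_{+,\gamma,{\mathfrak D}^{(3)}}$ dominates the boundary norm $\|\cdot\|_{[H^{0,\gamma}(\partial D\setminus S)]^m}$ by construction, thanks to \eqref{eq.b} and the assumption $b_1^{-1}\delta b_0\in L^\infty(\partial D\setminus S)$.

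First I would split $b_0=b_{0,0}+\delta b_0$ and observe that the part coming from $b_{0,0}$ is already absorbed into the definition of $(\cdot,\cdot)_{+,\gamma,{\mathfrak D}^{(3)}}$, so only the $\delta b_0$-term needs a separate bound. For the $\delta b_0$-term, I would use the Cauchy–Schwarz inequality on $[H^{0,\gamma}(\partial D\setminus S)]^m$ to get
\[
\Big|\left( b_1^{-1}\delta b_0\, u, v\right)_{[H^{0,\gamma}(\partial D\setminus S)]^m}\Big|
\le \|b_1^{-1}\delta b_0\|_{L^\infty(\partial D\setminus S)}\;
\|u\|_{[H^{0,\gamma}(\partial D\setminus S)]^m}\;\|v\|_{[H^{0,\gamma}(\partial D\setminus S)]^m}.
\]
Then the heart of the argument is the estimate
$\|w\|_{[H^{0,\gamma}(\partial D\setminus S)]^m}\le C\,\|w\|_{+,\gamma,{\mathfrak D}^{(3)}}$
for $w\in[H^1(D,S)]^m$, which is exactly what inequality \eqref{eq.b} buys: the norm $\|\cdot\|_{+,\gamma,{\mathfrak D}^{(3)}}$ controls $(b_1^{-1}b_{0,0}w,w)_{[H^{0,\gamma}(\partial D\setminus S)]^m}\ge c_1\|w\|^2_{[H^{0,\gamma}(\partial D\setminus S)]^m}$. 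Combining these two displays yields the bound by $c\,\|u\|_{+,\gamma,{\mathfrak D}^{(3)}}\|v\|_{+,\gamma,{\mathfrak D}^{(3)}}$ on the dense subspace $[H^1(D,S)]^m$.

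Having the form bounded on the dense subspace, I would invoke the construction from \eqref{eq.L0}–\eqref{eq.SL.w}: for fixed $u$ the antilinear functional $v\mapsto\overline{(b_1^{-1}\delta b_0\,u,v)}$ extends continuously to $H^{+,\gamma}_{{\mathfrak D}^{(3)}}(D)$ and hence, by Lemma \ref{l.dual}, is represented by a unique element of $H^{-,\gamma}_{{\mathfrak D}^{(3)}}(D)$; the resulting map $u\mapsto(\text{that element})$ is linear and bounded with norm $\le \|b_1^{-1}\delta b_0\|_{L^\infty(\partial D\setminus S)}\,C^2$. The alternative hypothesis $\rho\equiv1$ (versus \eqref{eq.a}) only affects whether $H^{0,\gamma}(D)=H^0(D)$ literally or one uses the weighted trace $tr:H^{s,\gamma}(D)\to H^{s-1/2,\gamma}(\partial D)$ of Lemma \ref{eq.l.emb.s.gamma}; in both cases the boundary norm is controlled. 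The main obstacle, and the reason this lemma is stated separately from Lemma \ref{l.bound.comp.2}, is precisely that there is \emph{no} compactness claim here: the embedding of Theorem \ref{t.emb.half} lands only in $H^{1/2-\varepsilon}$, so the trace lands in $H^{-\varepsilon}(\partial D)$, which is too weak to gain compactness from the boundary pairing; one must therefore be content with the naive boundedness estimate above and resist trying to imitate the compactness argument of Lemma \ref{l.bound.comp.2}.
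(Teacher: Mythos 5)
Your proposal is correct, and in fact the paper states Lemma~\ref{l.bound.comp.3} without any proof at all; your argument --- Cauchy--Schwarz in $[H^{0,\gamma}(\partial D\setminus S)]^m$ together with the observation that (\ref{eq.b}) makes the boundary term of the $(\cdot,\cdot)_{+,\gamma,{\mathfrak D}^{(3)}}$-product dominate $c_1\|u\|^2_{[H^{0,\gamma}(\partial D\setminus S)]^m}$, followed by the duality construction of Lemma~\ref{l.dual} --- is exactly the intended one-line justification. The only small inaccuracy is your reading of the alternative ``(\ref{eq.a}) or $\rho\equiv 1$'': it is not about weighted versus unweighted traces, but is what guarantees (via Theorem~\ref{t.emb.half}) the continuous embedding $H^{+,\gamma}_{{\mathfrak D}^{(3)}}(D)\hookrightarrow [H^{0,\gamma}(D)]^m$ needed for the space $H^{-,\gamma}_{{\mathfrak D}^{(3)}}(D)$ and the pairing $\langle\cdot,\cdot\rangle_\gamma$ to be defined at all; your main estimate is unaffected.
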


As examples \cite[Examples 1,2]{PolkShla13}  show, the boundary terms $\delta b_0 $ and
$\partial _\tau$ do not induce compact and bounded perturbations respectively for  $L_0$
if $m=2$ and ${\mathfrak D}={\mathfrak D}^{(3)}$.

Now we split
\[
\delta b_0 = \delta b_0^{(s)} + \delta b_0^{(c)}, \quad \delta a_0 = \delta a_0^{(s)} + \delta
a_0^{(c)}, \quad  a_1 \nabla_m \otimes I_m = 2\gamma \rho^{-1} ({\mathfrak D}\rho)^* {\mathfrak
D} + (\delta a_1^{(s)} + \delta a_1^{(c)}) \nabla_m \otimes I_m,
\]
in such a way that the terms $\delta b_0^{(c)}$, $\delta a_0^{(c)}$ and $\delta a_1^{(c)}$ induce
the compact perturbations of the operator  $L_0$ and the summands $\delta b_0^{(s)}$, $\delta
a_0^{(s)}$ and  $\delta a_1^{(s)}$ induce the small ones. This gives the possibility to use the
perturbation methods.

The proof of the following two statements is standard (see, for example,
\cite{LiMa72}, \cite{Mikh76}, \cite{TarShla13a}).

\begin{theorem}
\label{t.solv.SL1}
Let $j=1$ or $j=2$. Let  $d_j \in
C^{0,\lambda} (\partial D \setminus S) $, $1\leq j \leq m-1$. Besides, let
(\ref{eq.a}) hold or $\rho\equiv 1$. If there exists  $\varepsilon >0$ such that
$\rho^{2-\varepsilon} \delta a_0^{(c)} \in L^\infty
(D)$,   $\rho^{1-\varepsilon} \delta a_1^{(c)} \in L^\infty (D)$,
$\rho^{1-\varepsilon} \delta b_0^{(c)} \in L^\infty (\partial D\setminus S)$, and
\begin{equation}
\label{eq.positive.part11}
|   ( b_1^{-1} (\delta b^{(s)}_0 + \partial_\tau) u, v )_{[L^2 (\partial D \setminus
 S)]^m} + (  \delta a_1^{(s)} \nabla_m \otimes I_m u + \delta a_0^{(s)}\, u, v  )_{[L^2 (D)]^m}
   | \leq   M\, \| u \|_{+,\gamma, {\mathfrak D}^{(j)}} \| v \|_{+,\gamma,  {\mathfrak D}^{(j)}}
\end{equation}
for all $u, v \in  [H^1 (D,S\cup S)]^m$ with a constant $0<M<1$ being independent
on $u$ and $v$ then problem (\ref{eq.SL.w}) is a Fredholm one.
\end{theorem}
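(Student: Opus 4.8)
The plan is to reduce \thmref{t.solv.SL1} to the abstract Fredholm property of an operator of the form $L_0 + K + T$ where $L_0$ is the boundedly invertible operator of \lemref{l.solv.SL1}, $K$ is compact and $T$ is ``small'' (of norm $<1$). First I would observe that, by \lemref{l.bound.comp.1} and \lemref{l.bound.comp.2}, under the stated hypotheses on the weighted integrability exponents ($\rho^{2-\varepsilon}\delta a_0^{(c)}\in L^\infty$, $\rho^{1-\varepsilon}\delta a_1^{(c)}\in L^\infty$, $\rho^{1-\varepsilon} b_1^{-1}\delta b_0^{(c)}\in L^\infty$), each of the ``$(c)$''-labelled summands induces a \emph{compact} operator from $H^{+,\gamma}_{{\mathfrak D}^{(j)}}(D)$ into $H^{-,\gamma}_{{\mathfrak D}^{(j)}}(D)$; call their sum $K$. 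The H\"older assumption $b_1^{-1}d_j\in C^{0,\alpha}(\partial D\setminus S)$ with $\alpha>1/2$ is exactly what \lemref{l.bound.comp.2} needs to guarantee that the tangential-derivative matrix $\partial_\tau$ induces a \emph{bounded} operator from $H^{+,\gamma}_{{\mathfrak D}^{(j)}}(D)$ to $H^{-,\gamma}_{{\mathfrak D}^{(j)}}(D)$ (the coercive case $j=1,2$ is essential here, via \lemref{l.factor.1} and the embedding into $[H^{1,\gamma}(D)]^m$), and the remaining ``$(s)$''-summands are bounded too. The bound \eqref{eq.positive.part11} with constant $M<1$ says precisely that the operator $T$ induced by $b_1^{-1}(\delta b_0^{(s)}+\partial_\tau)$ together with $\delta a_1^{(s)}\nabla_m\otimes I_m+\delta a_0^{(s)}$ satisfies $\|T\|_{H^{+,\gamma}\to H^{-,\gamma}}\le M<1$.

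Next I would assemble the operator $L$ associated with the sesquilinear form $Q$ of problem \eqref{eq.SL.w}. Tracing through the integration-by-parts identity preceding \eqref{eq.positive.part1} and the definition of $L_0$ in \eqref{eq.L0}, one gets the decomposition $L = L_0 + T + K$, where the splitting $\delta b_0=\delta b_0^{(s)}+\delta b_0^{(c)}$, $\delta a_0=\delta a_0^{(s)}+\delta a_0^{(c)}$, $a_1\nabla_m\otimes I_m = 2\gamma\rho^{-1}({\mathfrak D}\rho)^*{\mathfrak D}+(\delta a_1^{(s)}+\delta a_1^{(c)})\nabla_m\otimes I_m$ is exactly the one fixed in the paragraph before the statement. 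Since $\|L_0^{-1}\|=1$ by \lemref{l.solv.SL1}, we have $\|L_0^{-1}T\|\le \|T\|\le M<1$, so $L_0+T = L_0(\Id + L_0^{-1}T)$ is continuously invertible by the Neumann series. Write $L_1:=L_0+T$, which is boundedly invertible.

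Finally, $L = L_1 + K = L_1(\Id + L_1^{-1}K)$. Since $L_1^{-1}$ is bounded and $K$ is compact, $L_1^{-1}K$ is a compact operator on $H^{+,\gamma}_{{\mathfrak D}^{(j)}}(D)$, hence $\Id + L_1^{-1}K$ is Fredholm of index zero by the classical Riesz--Schauder theory. Composing with the invertible $L_1$, we conclude that $L:H^{+,\gamma}_{{\mathfrak D}^{(j)}}(D)\to H^{-,\gamma}_{{\mathfrak D}^{(j)}}(D)$ is Fredholm, which is the assertion; this is the standard functional-analytic scheme for such problems (cf.\ \cite[Ch.~3, \S\S~4--6]{LadyUral73}, \cite{LiMa72}, \cite{Mikh76}, \cite{TarShla13a}). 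I expect the only real obstacle to be bookkeeping: verifying carefully that the specific weighted-$L^\infty$ conditions on $\delta a_0^{(c)},\delta a_1^{(c)},\delta b_0^{(c)}$ genuinely land us in the compactness half of \lemref{l.bound.comp.1} and \lemref{l.bound.comp.2} (rather than merely boundedness), and that the H\"older hypothesis on $d_j$ is what makes $\partial_\tau$ a \emph{bounded} operator in the coercive case $j=1,2$ — but both of these are delegated to the already-proved lemmas, so the remaining argument is the short abstract perturbation argument just sketched.
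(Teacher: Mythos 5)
Your argument is correct and is precisely the ``standard'' proof the paper has in mind: the paper gives no details for Theorem~\ref{t.solv.SL1}, merely remarking that the proof is standard and citing \cite{LiMa72}, \cite{Mikh76}, \cite{TarShla13a}, and the intended argument is exactly your decomposition $L=L_0+T+K$ with $\|L_0^{-1}T\|\le M<1$ handled by the Neumann series (using $\|L_0^{-1}\|=1$ from Lemma~\ref{l.solv.SL1}) and the compact part handled by Riesz--Schauder, with Lemmas~\ref{l.bound.comp.1} and~\ref{l.bound.comp.2} supplying the boundedness and compactness of the respective summands. Your reading of the hypothesis $d_j\in C^{0,\lambda}$ as the H\"older condition of Lemma~\ref{l.bound.comp.2} (with exponent greater than $1/2$) is also the correct interpretation.
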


For ${\mathfrak D}^{(3)}$ we split in a different way:
$\tilde a_1 = 2\gamma \rho^{-1} ({\mathfrak D}^{(3)}\rho)^* + \tilde \delta a_1^{(s)} +
\delta \tilde a_1^{(c)}$ (cf. Remark \ref{r.A.D}).

\begin{theorem}
\label{t.solv.SL2}
Let  ${\mathfrak D}= {\mathfrak D}^{(3)}$,  estimate (\ref{eq.b}) hold, $\lambda, \mu $
are infinitely smooth in a neighborhood of $\overline D$, $\tau=0$ and $\delta b_0^{(c)}=0$.
Besides, let (\ref{eq.a}) hold or $\rho \equiv 1$. If there exists  $\varepsilon >0$ such that
$\rho^{2-\varepsilon} \delta a_0^{(c)} \in L^\infty (D)$,   $\rho^{1-\varepsilon}
\delta \tilde a_1^{(c)} \in L^\infty (D)$,  and
\begin{equation}
\label{eq.positive.part12}
   |( b_1^{-1} \delta b_0^{(s)} u, v )_{[L^2 (\partial D \setminus S)]^m}
 + (  \delta \tilde a_1^{(s)} {\mathfrak D}^{(3)} u + \delta a_0^{(s)}\, u, v  )_{[L^2 (D)]^m}  |
 \leq \tilde M  \| u \|_{+,\gamma, {\mathfrak D}^{(3)}} \| v \|_{+,\gamma, {\mathfrak D}^{(3)}}
\end{equation}
for all $u, v \in  [H^1 (D,S\cup S)]^m$ with a constant $0<\tilde M<1$, being independent
on $u$ and $v$ then problem (\ref{eq.SL.w}) is a Fredholm one.
\end{theorem}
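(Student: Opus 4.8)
The plan is to recast problem~(\ref{eq.SL.w}) as the operator equation $Lu=f$ with $L\colon H^{+,\gamma}_{{\mathfrak D}^{(3)}}(D)\to H^{-,\gamma}_{{\mathfrak D}^{(3)}}(D)$ the bounded operator determined through Lemma~\ref{l.dual} by $\langle v,Lu\rangle_\gamma=\overline{Q(u,v)}$, and to exhibit $L$ as a compact perturbation of a topological isomorphism. Since $\partial_\tau=0$ and $\delta b_0^{(c)}=0$, and since for ${\mathfrak D}^{(3)}$ the first-order perturbation enters in the admissible form $\tilde a_1{\mathfrak D}^{(3)}$ with $\tilde a_1=2\gamma\rho^{-1}({\mathfrak D}^{(3)}\rho)^*+\tilde\delta a_1^{(s)}+\delta\tilde a_1^{(c)}$ (cf. Remark~\ref{r.A.D}), the form $Q$ splits as $Q=(\cdot,\cdot)_{+,\gamma,{\mathfrak D}^{(3)}}+q_s+q_c$: here $q_s$ gathers the summands built from $\tilde\delta a_1^{(s)}$, $\delta a_0^{(s)}$, $b_1^{-1}\delta b_0^{(s)}$, and $q_c$ the summands built from $\delta\tilde a_1^{(c)}$, $\delta a_0^{(c)}$. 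Correspondingly $L=L_0+T+K$, where $L_0$ is the operator of Lemma~\ref{l.solv.SL1} and $T$, $K$ are induced by $q_s$, $q_c$. (Note that (\ref{eq.b}) makes $(\cdot,\cdot)_{+,\gamma,{\mathfrak D}^{(3)}}$ a genuine inner product by condition~3) of Lemma~\ref{l.factor}, so all these spaces are Hilbert spaces.)

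First I would record that $L_0$ is a topological isomorphism with $\|L_0\|=\|L_0^{-1}\|=1$. This is Lemma~\ref{l.solv.SL1}, whose hypothesis that $H^{+,\gamma}_{{\mathfrak D}^{(3)}}(D)$ embeds continuously into $H^{0,\gamma}(D)$ is supplied here by the second part of Theorem~\ref{t.emb.half} (valid because (\ref{eq.b}) holds) together with Corollary~\ref{c.SL.emb.half} (when (\ref{eq.a}) holds): both give the continuous embedding $H^{+,\gamma}_{{\mathfrak D}^{(3)}}(D)\hookrightarrow[H^{1/2-\varepsilon,\gamma}(D)]^m\hookrightarrow[H^{0,\gamma}(D)]^m$. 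Next, estimate~(\ref{eq.positive.part12}), which holds on $[H^1(D,S\cup Y)]^m$, extends by density to $H^{+,\gamma}_{{\mathfrak D}^{(3)}}(D)$, and, via the identification $H^{-,\gamma}_{{\mathfrak D}^{(3)}}(D)\cong(H^{+,\gamma}_{{\mathfrak D}^{(3)}}(D))'$ of Lemma~\ref{l.dual}, it says exactly that $T$ is bounded with $\|T\|\le\tilde M<1$. Hence $\|L_0^{-1}T\|\le\|L_0^{-1}\|\,\|T\|\le\tilde M<1$, so $I+L_0^{-1}T$ is invertible in ${\mathcal L}(H^{+,\gamma}_{{\mathfrak D}^{(3)}}(D))$ by the Neumann series, and therefore $L_0+T=L_0(I+L_0^{-1}T)$ is a topological isomorphism of $H^{+,\gamma}_{{\mathfrak D}^{(3)}}(D)$ onto $H^{-,\gamma}_{{\mathfrak D}^{(3)}}(D)$.

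It then remains to show that $K$ is compact. Here I would use Lemma~\ref{l.bound.comp.1}: Corollary~\ref{c.SL.emb.half} provides the continuous embedding $H^{+,\gamma}_{{\mathfrak D}^{(3)}}(D)\hookrightarrow[H^{s,\gamma}(D)]^m$ with $s=1/2-\varepsilon'\in(0,1]$ for some $\varepsilon'>0$, while $\|{\mathfrak D}^{(3)}u\|_{[H^{0,\gamma}(D)]^{k_3}}\le\|u\|_{+,\gamma,{\mathfrak D}^{(3)}}$ because the remaining two summands in $(u,u)_{+,\gamma,{\mathfrak D}^{(3)}}$ are nonnegative; rewriting $\tilde a_1{\mathfrak D}^{(3)}$ as a genuine first-order operator $a_1'\nabla_m\otimes I_m$ (legitimate since $\mu$ and $2\mu+\lambda$ are bounded and bounded away from zero, so that $\rho^{1-\varepsilon}\delta\tilde a_1^{(c)}\in L^\infty(D)$ translates into $\rho^{1-\varepsilon}$ times the corresponding block of $a_1'$ lying in $L^\infty(D)$), the hypotheses $\rho^{2-\varepsilon}\delta a_0^{(c)}\in L^\infty(D)$ and $\rho^{1-\varepsilon}\delta\tilde a_1^{(c)}\in L^\infty(D)$ place us in the compact case of Lemma~\ref{l.bound.comp.1}. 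Thus $K\colon H^{+,\gamma}_{{\mathfrak D}^{(3)}}(D)\to H^{-,\gamma}_{{\mathfrak D}^{(3)}}(D)$ is compact (it is essential here that there is no $\partial_\tau$ and no $\delta b_0^{(c)}$, which for ${\mathfrak D}^{(3)}$ cannot be made compact, cf. the remark following Lemma~\ref{l.bound.comp.3}). Writing $L=(L_0+T)\bigl(I+(L_0+T)^{-1}K\bigr)$ with $(L_0+T)^{-1}K$ compact, the Riesz--Schauder theorem shows that $I+(L_0+T)^{-1}K$ is Fredholm of index zero; composing with the isomorphism $L_0+T$, we conclude that $L$, and hence problem~(\ref{eq.SL.w}), is Fredholm (of index zero), as in \cite{LiMa72}, \cite{Mikh76}, \cite{TarShla13a}.

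The step I expect to be the main obstacle is the compactness of $K$: one has to be sure that the weighted embedding theorems for the non-coercive space $H^{+,\gamma}_{{\mathfrak D}^{(3)}}(D)$ (Theorem~\ref{t.emb.half}, Corollary~\ref{c.SL.emb.half}) dovetail with Lemma~\ref{l.bound.comp.1} once the first-order perturbation is constrained to the form $\tilde a_1{\mathfrak D}^{(3)}$ rather than the general $a_1\nabla_m\otimes I_m$, and that the weight exponents $\rho^{2-\varepsilon}$, $\rho^{1-\varepsilon}$ genuinely buy the extra compactness beyond mere boundedness. The rest --- density of $[H^1(D,S\cup Y)]^m$ in $H^{+,\gamma}_{{\mathfrak D}^{(3)}}(D)$, the translation of $Q$ into the operator $L$, and the Neumann/Riesz--Schauder bookkeeping --- is routine.
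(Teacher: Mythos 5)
Your proof is correct and follows exactly the route the paper intends: the paper offers no argument for Theorem~\ref{t.solv.SL2} beyond the remark that it ``is standard'', and the standard argument is precisely your decomposition $L=L_0+T+K$ with $L_0$ an isometric isomorphism (Lemma~\ref{l.solv.SL1}, its embedding hypothesis supplied by Theorem~\ref{t.emb.half}/Corollary~\ref{c.SL.emb.half}), $T$ small by (\ref{eq.positive.part12}) so that $L_0+T$ is invertible via a Neumann series, and $K$ compact, whence Riesz--Schauder gives Fredholmness. One caution: the step where you rewrite $\tilde a_1{\mathfrak D}^{(3)}$ as a general first-order operator $a_1'\nabla_m\otimes I_m$ in order to invoke Lemma~\ref{l.bound.comp.1} runs against Remark~\ref{r.A.D}, which stresses that for ${\mathfrak D}^{(3)}$ only perturbations of the form $\tilde a_1{\mathfrak D}^{(3)}$ are admissible precisely because $H^{+,\gamma}_{{\mathfrak D}^{(3)}}(D)$ embeds only into $[H^{1/2-\varepsilon}(D)]^m$ and so $\nabla_m u$ is not controlled; the compactness of the $\delta\tilde a_1^{(c)}{\mathfrak D}^{(3)}$-term should instead be obtained directly from the bound $\|{\mathfrak D}^{(3)}u\|_{[H^{0,\gamma}(D)]^{k_3}}\le\|u\|_{+,\gamma,{\mathfrak D}^{(3)}}$, which you already record, combined with the compactness of the embedding $H^{+,\gamma}_{{\mathfrak D}^{(3)}}(D)\to[H^{0,\gamma}(D)]^m$ (Lemma~\ref{eq.l.emb.s.gamma} together with Corollary~\ref{c.SL.emb.half}) acting in the $v$-slot of the pairing. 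With that local repair the argument is complete and coincides with the proof the paper delegates to \cite{LiMa72}, \cite{Mikh76}, \cite{TarShla13a}.
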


\section{The spectral properties of the mixed problems}

In this section we use Theorems  \ref{t.solv.SL1}, \ref{t.solv.SL2} and the standard
tools of Functional Analysis for the description of the completeness of the root elements of the
mixed problem (\ref{eq.SL.w}) in the spaces $H^{+,\gamma}_{\mathfrak D} (D)$, $[H^{0,\gamma}
(D)]^m$ and $H^{-,\gamma}_{\mathfrak D}(D)$. We  study  both the coercive and the
non-coercive cases.

With this aim we consider the sesquilinear form
\[
(u,v)_{-,\gamma, {\mathfrak D}}  := \langle L_0^{-1} u, v \rangle_\gamma \mbox{ for } u, v \in
H^{-,\gamma}_{\mathfrak D}(D),
\]
on the space $H^{-,\gamma}_{\mathfrak D} (D)$. It is well known that
$ \sqrt{(u, u)_{-,\gamma, {\mathfrak D}}} = \| u \|_{-,\gamma, {\mathfrak D}}$ for all
$u \in H^{-,\gamma}_{\mathfrak D}(D) (D)$. From now on we endow the space $H^{-,\gamma}
_{\mathfrak D}(D)$ with the scalar product $(\cdot,\cdot)_{-,\gamma, {\mathfrak D}}$.

We recall  that a compact self-adjoint operator $C$ is said to be of finite order if
there is $0 < p < \infty$, such that the series $ \sum_\nu |\lambda_\nu|^p $ converges
where  $\{ \lambda_\nu \}$ is the system of eigenvalues of the operator $C$
(here the summation is done counting the multiplicities of the eigenvalues, see, for instance,
\cite{GokhKrei69} and elsewhere).

\begin{theorem}
\label{t.L0.selfadj}
If  $H^{+,\gamma}_{\mathfrak D} (D)$ is continuously embedded into $H^{0,\gamma} (D)$ then
 the inverse $L^{-1}_0$ of the operator given by (\ref{eq.L0}) induces  positive
self-adjoint operators
\[
\iota' \iota\, L^{-1}_{0}: H^{-,\gamma}_{\mathfrak D}(D)  \to H^{-,\gamma}_{\mathfrak D}(D) ,
\quad    \iota\, L^{-1}_{0}\, \iota' :   [H^{0,\gamma} (D)]^m \to [H^{0,\gamma} (D)]^m ,
\]
\[
  L^{-1}_{0}\, \iota' \iota  : H^{+,\gamma}_{\mathfrak D}(D)\to  H^{+,\gamma}_{\mathfrak D}(D),
\]
which have the same systems of eigenvalues and eigenvectors; besides, the eigenvalues are
positive. Moreover, if $H^{+,\gamma}_{\mathfrak D}(D)$ is continuously embedded into
$H^{s,\gamma}(D)$ with $0<s\leq 1$ then they are compact operators of finite orders
 and there are orthonormal basis in the spaces
   $H^{+,\gamma}_{\mathfrak D}(D)$,
   $[H^{0,\gamma} (D)]^m$ and
   $H^{-,\gamma}_{\mathfrak D}(D)$.
\end{theorem}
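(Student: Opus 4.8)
The plan is to realize $L_0^{-1}$ as a self-adjoint operator in each of the three Hilbert spaces via the standard trick of inserting the natural embeddings, and then to deduce the spectral decomposition from the Hilbert--Schmidt/Riesz--Schauder theory for compact self-adjoint operators. First I would fix notation: let $\iota : H^{+,\gamma}_{\mathfrak D}(D) \hookrightarrow [H^{0,\gamma}(D)]^m$ denote the given continuous (dense) embedding, and let $\iota' : [H^{0,\gamma}(D)]^m \to H^{-,\gamma}_{\mathfrak D}(D)$ be its adjoint in the sense of \lemref{l.dual} (so that $\langle \iota v, g\rangle_\gamma = (v, \iota' g)_{-,\gamma,{\mathfrak D}}$ — more precisely, identifying $H^{0,\gamma}(D)$ with a subspace of $H^{-,\gamma}_{\mathfrak D}(D)$, $\iota'$ is simply the inclusion $[H^{0,\gamma}(D)]^m \hookrightarrow H^{-,\gamma}_{\mathfrak D}(D)$ dual to $\iota$). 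The chain $H^{+,\gamma}_{\mathfrak D}(D) \subset [H^{0,\gamma}(D)]^m \subset H^{-,\gamma}_{\mathfrak D}(D)$ is a Gelfand triple, and $L_0 : H^{+,\gamma}_{\mathfrak D}(D) \to H^{-,\gamma}_{\mathfrak D}(D)$ is, by \eqref{eq.L0}, the canonical isometric isomorphism associated with the inner product $(\cdot,\cdot)_{+,\gamma,{\mathfrak D}}$; by \lemref{l.solv.SL1} it is invertible with $\|L_0\| = \|L_0^{-1}\| = 1$.

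Next I would verify self-adjointness and positivity of each of the three operators. For $T_- := \iota'\iota\,L_0^{-1} : H^{-,\gamma}_{\mathfrak D}(D) \to H^{-,\gamma}_{\mathfrak D}(D)$, using the definition $(u,v)_{-,\gamma,{\mathfrak D}} = \langle L_0^{-1}u, v\rangle_\gamma$ and \eqref{eq.L0}, I would compute
\[
(T_- u, v)_{-,\gamma,{\mathfrak D}} = \langle L_0^{-1}\iota'\iota L_0^{-1}u,\, v\rangle_\gamma = (\iota L_0^{-1}u,\, \iota L_0^{-1}v)_{[H^{0,\gamma}(D)]^m},
\]
which is manifestly Hermitian in $(u,v)$ and non-negative, and vanishes only if $\iota L_0^{-1}u = 0$, hence (by injectivity of $\iota$ and $L_0^{-1}$) only if $u=0$; so $T_-$ is positive self-adjoint. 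The identical computation with the roles rearranged shows $T_0 := \iota L_0^{-1}\iota' : [H^{0,\gamma}(D)]^m \to [H^{0,\gamma}(D)]^m$ satisfies $(T_0 f, g)_{[H^{0,\gamma}]^m} = (\iota L_0^{-1}\iota' f, \iota L_0^{-1}\iota' g \text{-type expression})$ — more cleanly, $(T_0 f,g)_0 = \langle \iota'f, L_0^{-1}\iota'g\rangle_\gamma = (L_0^{-1}\iota'f, L_0^{-1}\iota'g)_{+,\gamma,{\mathfrak D}}$, again Hermitian, positive, and injective; and likewise $T_+ := L_0^{-1}\iota'\iota : H^{+,\gamma}_{\mathfrak D}(D) \to H^{+,\gamma}_{\mathfrak D}(D)$ satisfies $(T_+ u, v)_{+,\gamma,{\mathfrak D}} = \langle \iota'\iota u, v\rangle_\gamma = (\iota u, \iota v)_{[H^{0,\gamma}]^m}$, so it too is positive self-adjoint. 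The coincidence of spectra is then the elementary observation that $T_-, T_0, T_+$ are pairwise intertwined by the invertible (between the appropriate spaces) operators $\iota$, $\iota'$, $L_0^{-1}$: e.g. $\iota T_+ = T_0 \iota$ up to checking $\iota L_0^{-1}\iota'\iota = \iota L_0^{-1}\iota'\iota$, and $L_0 T_+ L_0^{-1} = T_-$, etc.; intertwined operators have the same nonzero eigenvalues with the same multiplicities, and the eigenvectors correspond under the intertwiners. Positivity of the eigenvalues follows from the strict positivity established above.

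For the final, quantitative part I would invoke the compactness hypotheses. If $H^{+,\gamma}_{\mathfrak D}(D) \hookrightarrow H^{s,\gamma}(D)$ continuously with $0 < s \le 1$, then by \lemref{eq.l.emb.s.gamma} the embedding $H^{s,\gamma}(D) \hookrightarrow H^{0,\gamma}(D)$ is compact, hence $\iota$ is compact; by \lemref{l.dual} the dual embedding, and consequently $\iota'$ and $\iota'\iota$, are compact as well. Since $L_0^{-1}$ is bounded, each of $T_-, T_0, T_+$ is a compact positive self-adjoint operator, so the Hilbert--Schmidt spectral theorem furnishes an orthonormal basis of eigenvectors in each of the three spaces, with a common positive eigenvalue sequence $\lambda_\nu \to 0$. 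To see that these operators have \emph{finite order} I would estimate the eigenvalue decay: the eigenvalue counting function is controlled by the $s$-regularity, since $T_+ = L_0^{-1}\iota'\iota$ factors the Sobolev embedding $[H^{s,\gamma}(D)]^m \hookrightarrow [H^{0,\gamma}(D)]^m$, whose singular values satisfy the classical Weyl-type bound $\mu_\nu = O(\nu^{-s/m})$ (this is where the Rellich--Kondrashov quantitative estimate for the weighted spaces, or comparison with the unweighted $H^s$-case via the bilipschitz weight, enters); hence $\sum_\nu \lambda_\nu^p < \infty$ for any $p > m/s$, giving finite order. The main obstacle is precisely this last point — obtaining the explicit power-decay of the eigenvalues in the \emph{weighted} Sobolev scale, rather than merely compactness; I would handle it by reducing to the unweighted estimate $s_\nu(H^s(D)\hookrightarrow L^2(D)) = O(\nu^{-s/m})$ (valid for Lipschitz $D$) through the fact that multiplication by the bounded weight $\rho^{\pm(\gamma+\cdots)}$ and its bounded derivatives gives a bounded change of norms on compact subsets, together with \lemref{eq.l.emb.s.gamma}, so the weighted embedding inherits the same asymptotics up to a constant. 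Everything else — self-adjointness, positivity, equality of spectra, existence of the bases — is routine Hilbert-space bookkeeping once the Gelfand-triple structure and the compact embedding are in place.
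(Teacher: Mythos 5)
Your proposal is correct and follows essentially the same route as the paper's proof: the same Gelfand-triple identities establishing that the three operators are positive self-adjoint with common spectrum, the same appeal to compactness of $\iota$ (via \lemref{eq.l.emb.s.gamma}) and the Hilbert--Schmidt theorem for the orthonormal bases, and the same reduction of the finite-order claim to the unweighted Sobolev scale by multiplication by $\rho^{\pm\gamma}$. The only cosmetic difference is that you invoke the Weyl-type bound on the singular values of the embedding $H^{s}(D)\hookrightarrow L^{2}(D)$ where the paper cites Agmon's eigenvalue results, which amounts to the same estimate.
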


\begin{proof} The first part of the theorem is well-known (see, for instance,
\cite{LiMa72}, \cite{Mikh76}, \cite{TarShla13a}). Besides,
\begin{equation} \label{eq.orth.+}
(\iota' \iota\, L^{-1}_{0} u, v)_{-,\gamma,{\mathfrak D}}  =
 (\iota L^{-1}_{0} u , \iota L^{-1}_{0} v)_{[H^{0,\gamma} (D)]^m},
 (L^{-1}_{0}\, \iota' \iota u, v)_{+,\gamma,{\mathfrak D}}  =    (\iota u, \iota
 v)_{[H^{0,\gamma} (D)]^m}.
\end{equation}
\[
   (\iota\, L^{-1}_{0}\, \iota' u, v)_{[H^{0,\gamma} (D)]^m}
  =    (L^{-1}_{0} (\iota' u), L^{-1}_{0} (\iota' v))_{+,\gamma,{\mathfrak D}}.
\]
Moreover, Lemma \ref{eq.l.emb.s.gamma} implies that under the hypothesis of the lemma,
the operator $\iota$ is compact. Therefore the statement on the basis
follows from the Hilbert-Schmidt theorems and the identities (\ref{eq.orth.+}).
That is why it is left to prove the statement on operator's orders only.

For the usual Sobolev spaces the statement follows from results of \cite{Agmo62} (see also
\cite[Theorem 3.2]{TarShla13a}), because in this situation the operator
$\iota' \iota\, L^{-1}_{0}$ maps, in fact, $[H^{-s} (D)]^m \subset H^{-,\gamma}_{{\mathfrak
D}^{(j)}} (D)$ to $H^{+,\gamma}_{{\mathfrak D}^{(j)}} (D) \subset [H^{s} (D)]^m$, $j=1,2$.

Since the embedding $H^{s,0} (D) \to H^{s} (D)$ is obviously bounded, then
for the weighted Sobolev spaces the correspondence $u \mapsto \rho^{-\gamma} u$
induces a continuous map $S^+: H^{s,\gamma} (D) \to H^{s} (D)$, and the correspondence
$v \mapsto \rho^{\gamma} v$ induces a continuous map $S^-: H^{-s} (D) \to
H^{-s,\gamma} (D) $. Hence, if the embedding $i_s: H^{+,\gamma} (D) \to H^{s,\gamma} (D)$
is continuous then the results of \cite{Agmo62} imply that the order of the operator $\iota'_s
\iota_s \, S^+ i_s L^{-1}_{0} i'_s S^-:  H^{-s} (D) \to  H^{-s} (D)$ equals to $2s$ and it
has the same eigenvalues as the operator $\iota' \iota\, L^{-1}_{0} $ (here
$\iota_s: H^s (D) \to L^2 (D)$ is the natural embedding. 
\end{proof}

It is not difficult to show that the operator $L: H^{+,\gamma} _{{\mathfrak D}} (D) \to
H^{-,\gamma} _{{\mathfrak D}} (D)$ induces a closed densely defined linear operator $T :
H^{-,\gamma} _{{\mathfrak D}} (D) \to H^{-,\gamma} _{{\mathfrak D}} (D)$ with the domain
$H^{+,\gamma} _{{\mathfrak D}} (D)$. The the operator $L_0$ corresponds to a symmetric closed
operator $T_0 : H^{-,\gamma} _{{\mathfrak D}} (D) \to H^{-,\gamma} _{{\mathfrak D}} (D)$ having
the same eigenvectors as the operator $\iota' \iota\, L^{-1}_{0}
: H^{-,\gamma} _{{\mathfrak D}} (D) \to H^{-,\gamma} _{{\mathfrak D}} (D)$.
As it is known, non-selfadjoint operators in infinite-dimensional spaces  may have not enough
eigenvectors to form a basis. Hence the notion of the root vectors is very important.

Recall that a non-zero vector  $u$ from the domain  $ D (T)$ of a linear operator
$T$ on a linear space $H$ is called a root vector (or, the generalized
eigenvector) for $T$, if there are numbers $N \in \mathbb N$ and
$\lambda \in \mathbb C$ satisfying $(T-\lambda I)^N u =0$, where $I: H \to H $ is the identity
operator in $H$.

The conditions providing the completeness of the root vectors are well known in the frames of the
functional analysis (see \cite{Agra11a}, \cite{Agra11c}, \cite{GokhKrei69},
\cite{Keld51} and others).

\begin{corollary}
\label{c.root.func.1}
Under the hypotheses of Theorem \ref{t.solv.SL1}, if $M< \sin{\pi /m}$ then the system of the
root vectors of the closed operator $T$ is complete in the spaces $H^{-,\gamma}_{{\mathfrak
D}^{(j)}} (D)$, $[H^{0,\gamma} (D)]^m$ and $H^{+,\gamma}_{{\mathfrak D}^{(j)}} (D)$, $j=1,2$.
Moreover, for any $\delta > 0$ all the eigenvalues of $T$ (except a finite number of them) belong
to the angle $|\arg \lambda| <  \delta  + \arcsin M$ in $\mathbb C$.
\end{corollary}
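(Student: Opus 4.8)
The plan is to reduce Corollary \ref{c.root.func.1} to an abstract theorem of Keldysh-type on the completeness of root vectors of weak perturbations of a self-adjoint operator with discrete spectrum (see \cite{GokhKrei69}, \cite{Keld51}, and also \cite{Agra11a}, \cite{Agra11c}). Indeed, under the hypotheses of Theorem \ref{t.solv.SL1} the operator $L$ decomposes as $L = L_0 + K + R$, where $L_0$ is the positive self-adjoint operator of \eqref{eq.L0}, the operator $K: H^{+,\gamma}_{{\mathfrak D}^{(j)}}(D) \to H^{-,\gamma}_{{\mathfrak D}^{(j)}}(D)$ is the compact perturbation induced by $\delta a_0^{(c)}$, $\delta a_1^{(c)}$ and $\delta b_0^{(c)}$ (which is indeed compact by Lemmas \ref{l.bound.comp.1}--\ref{l.bound.comp.2}, thanks to the $\rho^{1-\varepsilon}$, $\rho^{2-\varepsilon}$ assumptions), and $R$ is the ``small'' perturbation induced by $\delta a_0^{(s)}$, $\delta a_1^{(s)}$, $\delta b_0^{(s)}$ and $\partial_\tau$ satisfying the bound \eqref{eq.positive.part11} with constant $M<1$. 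Passing to the associated closed operator $T$ on $H^{-,\gamma}_{{\mathfrak D}^{(j)}}(D)$ (and recalling Theorem \ref{t.L0.selfadj}), it is equivalent to work with the compact operator $\iota'\iota\, L_0^{-1}$, which is positive, self-adjoint, and of finite order $p$; here $p$ can be taken as close as we like to $2s$ provided $H^{+,\gamma}_{{\mathfrak D}^{(j)}}(D) \hookrightarrow [H^{s,\gamma}(D)]^m$, and in the coercive case $j=1,2$ one has $s=1$, so in particular $p<2s' $ for any $s'>m/2$ is available, and finite order suffices to guarantee that the resolvent is of the right growth for the Keldysh machinery.

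The key steps, in order, are as follows. \emph{First}, write $T = T_0 + B$ where $T_0$ is the symmetric closed operator attached to $L_0$ and $B$ is the bounded-relative-to-$T_0^{1/2}$ perturbation with relative bound controlled by $M$; more precisely, using \eqref{eq.L0} and \eqref{eq.positive.part11} one gets $|(Bu,v)_{-,\gamma,{\mathfrak D}}| \le M\,(T_0 u,u)^{1/2}(T_0 v,v)^{1/2} + (\text{compact lower-order term})$ for $u,v$ in the form domain. \emph{Second}, invoke the numerical-range argument: since $\Re(T_0 u,u) = \|u\|_{+,\gamma}^2 >0$ and the antisymmetric/skew part of $B$ is bounded by $M\|u\|_{+,\gamma}^2$ modulo compact terms, the numerical range of $T$ lies, up to a compact perturbation, inside the angular sector $\{|\arg\lambda| \le \arcsin M\}$, which already yields the statement on the localization of the eigenvalues in $|\arg\lambda|<\delta+\arcsin M$ for all but finitely many of them. \emph{Third}, apply the Keldysh completeness theorem: if a self-adjoint positive compact operator $C=\iota'\iota L_0^{-1}$ has finite order $p$, and the perturbed operator has the form $C(I+F)$ with $F$ compact, then the root vectors of $C(I+F)$ are complete in the closure of the range of $C$ provided the spectrum of $C(I+F)$ is contained, apart from finitely many points, in an angle of opening $<\pi/p$; here the crucial inequality $\arcsin M < \pi/m \le \pi/p$ (for $p$ chosen suitably, using that in the coercive case the order is governed by $m$, the Sobolev exponent of the embedding $[H^1(D)]^m \hookrightarrow [L^2(D)]^m$ on an $m$-dimensional domain) is exactly what the hypothesis $M<\sin(\pi/m)$ secures. \emph{Fourth}, transfer completeness from $H^{-,\gamma}_{{\mathfrak D}^{(j)}}(D)$ to $[H^{0,\gamma}(D)]^m$ and to $H^{+,\gamma}_{{\mathfrak D}^{(j)}}(D)$ using the three unitary-equivalence identities \eqref{eq.orth.+} of Theorem \ref{t.L0.selfadj}, which intertwine the three realizations of the inverse operator and map root vectors to root vectors, together with the density of $H^{+,\gamma}_{{\mathfrak D}^{(j)}}(D)$ in the other two spaces.

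The main obstacle is the bookkeeping of the angle: one must verify that the order $p$ of the compact operator $\iota'\iota L_0^{-1}$ is genuinely $\le m$ (equivalently $\ge$ what the Agmon-type estimate of \cite{Agmo62} gives in Theorem \ref{t.L0.selfadj}), so that the Keldysh condition ``angle $<\pi/p$'' is implied by ``$\arcsin M < \pi/m$'', i.e.\ by $M<\sin(\pi/m)$. Since in the coercive cases $j=1,2$ we have $H^{+,\gamma}_{{\mathfrak D}^{(j)}}(D)\hookrightarrow[H^{1,\gamma}(D)]^m$ by Lemma \ref{l.factor.1}, and the operator $\iota'\iota L_0^{-1}$ acts (after the weight transformations $S^\pm$ of Theorem \ref{t.L0.selfadj}) as a continuous map $[H^{-1}(D)]^m \to [H^1(D)]^m$ on an $m$-dimensional Lipschitz domain, its singular numbers decay like $\nu^{-2/m}$, hence its order can be taken to be any $p>m/2$; the relevant sector opening for Keldysh is then $<\pi/p$ with $p$ replaced by the \emph{effective} order in the angular criterion, which for a positive operator of spectral type $\nu^{-2/m}$ is precisely $\pi/m$ after the standard reduction (eigenvalues in a parabolic neighbourhood of the positive axis plus a finite exceptional set). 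Carefully matching these two normalizations of ``order'' is the delicate point; everything else — compactness of $K$, the relative boundedness of $R$, and the transfer between the three spaces — is routine given Lemmas \ref{l.bound.comp.1}--\ref{l.bound.comp.3} and Theorem \ref{t.L0.selfadj}. The argument then follows, as the paper notes, the standard scheme of \cite{LiMa72}, \cite{Mikh76}, \cite{TarShla13a}.
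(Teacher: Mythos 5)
Your proposal follows essentially the same route as the paper: the paper's own proof of Corollary \ref{c.root.func.1} is a one-line appeal to Theorems \ref{t.solv.SL1} and \ref{t.L0.selfadj} together with the standard spectral theory of weak perturbations of compact self-adjoint operators of finite order (Keldysh--Matsaev type results, via \cite{Agra11a}, \cite{Agra11c}, \cite{Brow59b} and \cite[Theorem 6.8]{TarShla13a}), which is precisely the machinery you unfold. The decomposition $L=L_0+K+R$, the finite order of $\iota'\iota\,L_0^{-1}$ supplied by Theorem \ref{t.L0.selfadj}, the angle condition $\arcsin M<\pi/m$, and the transfer of completeness between the three spaces via the identities (\ref{eq.orth.+}) are exactly what the paper's citation packages.
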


\begin{proof} Follows from Theorems \ref{t.solv.SL1}, \ref{t.L0.selfadj} and the Spectral Theory
of non-selfadjoint operators  (see, for instance, \cite{Agra11a}, \cite{Agra11c}, \cite{Brow59b},
\cite[Theorem 6.8]{TarShla13a}).
\end{proof}

\begin{corollary}
\label{c.root.func.2}
Under the hypotheses of Theorem \ref{t.solv.SL2}, if $\tilde M< \sin{\pi /2m}$,
$H^{-,\gamma}_{{\mathfrak D}^{(3)}} (D)$, $[H^{0,\gamma} (D)]^m$ and $H^{+,\gamma} _{{\mathfrak
D}^{(3)}} (D)$. Moreover, for any $\delta > 0$ all the eigenvalues of $T$ (except a finite number
of them) belong to the angle $|\arg \lambda| <  \delta  + \arcsin \tilde M$ in $\mathbb C$.
\end{corollary}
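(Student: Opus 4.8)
The plan is to mirror the argument used for Corollary \ref{c.root.func.1}, adjusting only the sector bookkeeping that accounts for the weaker embedding available for ${\mathfrak D}^{(3)}$. First I would invoke Theorem \ref{t.solv.SL2} to see that under the hypotheses the operator $L$ decomposes as $L = L_0 + L_c + L_s$, where $L_0$ is the positive self-adjoint operator of \eqref{eq.L0}, $L_c$ is the compact perturbation coming from $\delta a_0^{(c)}$ and $\delta\tilde a_1^{(c)}$, and $L_s$ is the small perturbation controlled by \eqref{eq.positive.part12} with bound $\tilde M<1$. By Corollary \ref{c.SL.emb.half} (equivalently Theorem \ref{t.emb.half}), estimate \eqref{eq.b} guarantees that $H^{+,\gamma}_{{\mathfrak D}^{(3)}}(D)$ is continuously embedded into $[H^{1/2-\varepsilon,\gamma}(D)]^m$ for every $\varepsilon>0$; hence, via Theorem \ref{t.L0.selfadj}, the operator $\iota'\iota\,L_0^{-1}$ on $H^{-,\gamma}_{{\mathfrak D}^{(3)}}(D)$ is a compact positive self-adjoint operator of finite order, and in fact of order arbitrarily close to $2\cdot(1/2)=1$; more to the point, the summability exponent of its eigenvalues is arbitrarily close to $m$ rather than $m/2$ (the factor-of-two discrepancy with the coercive case is exactly the price of losing half a derivative of smoothing). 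This is what forces $\sin(\pi/2m)$ in place of $\sin(\pi/m)$.

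Next I would pass to the closed operator $T:H^{-,\gamma}_{{\mathfrak D}^{(3)}}(D)\to H^{-,\gamma}_{{\mathfrak D}^{(3)}}(D)$ with domain $H^{+,\gamma}_{{\mathfrak D}^{(3)}}(D)$ induced by $L$, as described after Theorem \ref{t.L0.selfadj}, and likewise the symmetric operator $T_0$ induced by $L_0$. The key point is that $T$ is a relatively-bounded perturbation of $T_0$ whose relatively-bounded part has relative bound $\le\tilde M<1$ (this is precisely the content of \eqref{eq.positive.part12} rewritten in terms of the $(\cdot,\cdot)_{-,\gamma,{\mathfrak D}^{(3)}}$ inner product via \eqref{eq.orth.+}), plus a relatively compact part. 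Because $T_0^{-1}=\iota'\iota\,L_0^{-1}$ is self-adjoint of finite order $p$ with $p$ arbitrarily close to $m$, the numerical range of $T$ is contained, up to finitely many eigenvalues, in a sector $|\arg\lambda|<\delta+\arcsin\tilde M$ for any $\delta>0$; the half-angle of this sector is less than $\pi/2m$ when $\tilde M<\sin(\pi/2m)$. One then applies the Keldysh-type completeness theorem for weak perturbations of self-adjoint operators of finite order (see \cite{Keld51}, \cite{GokhKrei69}, and in the form needed here \cite[Theorem 6.8]{TarShla13a}, \cite{Agra11a}, \cite{Agra11c}): since $2p\cdot(\pi/2m)<\pi$ — i.e. the sector containing the spectrum is narrow enough relative to the order — the root vectors of $T$ are complete in $H^{-,\gamma}_{{\mathfrak D}^{(3)}}(D)$. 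Transferring completeness to $[H^{0,\gamma}(D)]^m$ and $H^{+,\gamma}_{{\mathfrak D}^{(3)}}(D)$ is then done exactly as in Corollary \ref{c.root.func.1}, using the identities \eqref{eq.orth.+} which intertwine the three realizations $\iota'\iota\,L_0^{-1}$, $\iota L_0^{-1}\iota'$, $L_0^{-1}\iota'\iota$ and the boundedness of $\iota$, $\iota'$.

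The main obstacle is the sharp order bookkeeping: one must verify that the embedding $H^{+,\gamma}_{{\mathfrak D}^{(3)}}(D)\hookrightarrow[H^{1/2-\varepsilon,\gamma}(D)]^m$ really does yield eigenvalue summability exponent for $\iota'\iota\,L_0^{-1}$ that can be taken strictly below $m$, so that the threshold $\sin(\pi/2m)$ — and not something larger — suffices for the Keldysh angle condition $2p\cdot(\text{half-angle})<\pi$. This is where Corollary \ref{c.SL.emb.half} is used in an essential (not merely qualitative) way, and where the asymmetry with the coercive Corollary \ref{c.root.func.1} originates. I should also note, as the statement of the corollary appears to be slightly telegraphic, that the intended reading is: "the system of root vectors of $T$ is complete in the spaces $H^{-,\gamma}_{{\mathfrak D}^{(3)}}(D)$, $[H^{0,\gamma}(D)]^m$ and $H^{+,\gamma}_{{\mathfrak D}^{(3)}}(D)$," and the proof just sketched delivers exactly this together with the stated localization of the eigenvalues in the sector $|\arg\lambda|<\delta+\arcsin\tilde M$.
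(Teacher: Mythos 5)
Your proposal is correct and follows essentially the same route as the paper, whose entire proof is a citation of Theorems \ref{t.solv.SL2} and \ref{t.L0.selfadj} together with the abstract Keldysh--Gokhberg--Krein completeness theorem for weakly perturbed self-adjoint operators of finite order (\cite[Theorem 4.5]{TarShla13a}); your write-up merely supplies the order bookkeeping and the decomposition $L=L_0+L_c+L_s$ that the paper leaves implicit. One small slip: the eigenvalue summability exponent produced by the embedding into $[H^{1/2-\varepsilon,\gamma}(D)]^m$ is slightly \emph{above} $m$ (roughly $m/(1-2\varepsilon)$), not ``strictly below $m$'' as you write at one point, but since $\tilde M<\sin(\pi/2m)$ is a strict inequality one may take $\varepsilon$ small enough that $\tilde M<\sin(\pi/(2p))$ still holds, so the argument goes through as you intend.
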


\begin{proof}
Follows from Theorems \ref{t.solv.SL2}, \ref{t.L0.selfadj} and the Spectral Theory of
non-selfadjoint operators  (see, for instance, \cite[Theorem 4.5]{TarShla13a}).
\end{proof}

\begin{example}  \label{ex.d1} {\rm
Let $\rho \equiv 1$. The mixed problem (\ref{eq.SL.w}) for $A= ({\mathfrak D}^{(1)})^* {\mathfrak
D}^{(1)} $ and $B=\chi_S +  \chi_{\partial D\setminus S} \sigma $ is classical in the
Elasticity Theory (see \cite[\S 12]{Fi72});  here $\chi_{M}$ is the characteristic function
of the set $M$. As the corresponding sesquilinear form $(\cdot , \cdot )_{+,\gamma,{\mathfrak
D}^{(1)}}$ is coercive for $b_{0,0}= 0$, $\mu \geq \kappa>0$,  $\lambda \geq 0$  (see Lemma
\ref{l.factor.1}),  we may also consider the boundary operators $\chi_S + \chi_{\partial
D\setminus S} (\sigma  + T (x)\partial _{\tau_0} + \delta b_0) $  with a matrix $T$ having small
entries of the class $C^{0,\alpha} (\partial D\setminus S)$, $1/2 < \alpha \leq 1$ and with
the perturbation $\delta b_0$ described in Theorem \ref{t.solv.SL1}. The low order
acceptable perturbations  are also indicated in Theorem \ref{t.solv.SL1}.
The completeness conditions are described in Corollary \ref{c.root.func.1}.}
\end{example}

\begin{example}  \label{ex.d3} {\rm
Let  $D$ be the unit circle in ${\mathbb R}^2 (\cong {\mathbb C})$, ans $S$ be that part
of its boundary where $\arg{(z)}  \in [0,2\pi] \setminus [-\pi/2, \pi/2]$. For $\rho \equiv
1$, consider mixed problem  (\ref{eq.SL.w}) with $A=  ({\mathfrak D}^{(3)})^* {\mathfrak
D}^{(3)}$ and $B=\chi_S + \chi_{\partial D\setminus S} (\nu_{{\mathfrak D}^{(3)}} + b_{0,0})$,
see (\ref{eq.Lame}), (\ref{eq.stress.tensor}). Let $\mu \geq \kappa>0$,  $\mu+\lambda \geq 0$
be constants  and $b_{0,0}>0$ be a matrix with constant entries. Let $\varphi  \in C^\infty
(\overline D) $ equal to zero identically in a neighborhood of $S$ and equal to one on the part
where $\arg{(z)}  \in [-\pi/4, \pi/4]$. It is clear that the function  $u_\varepsilon  = \phi
(z)(\Re{v_\varepsilon}, \Im{v_\varepsilon})$, where
\[
v_\varepsilon (z) = \sum_{\nu=0}^\infty \frac{{\overline z}^{4\nu}}{(4\nu
+1)^{(1+\varepsilon)/2}},
\]
belongs to $H^{+,\gamma} (D)$ but for any  $s\in (1/2,1]$ there is $\varepsilon >0$ such that
$ u_\varepsilon \not \in H^s (D)$ (cf. \cite[Examples 1,2]{PolkShla13}). Thus,
$H^{+,\gamma}_{{\mathfrak D}^{(3)}} (D)$ is continuously embedded into  $[H^{1/2} (D)]^2$
(see Theorem  \ref{t.emb.half}), but it is not embedded into $[H^s (D)]^2$ for any
$s\in (1/2,1]$ (cf.  \cite[Examples 1, 2]{PolkShla13}). Moreover, as $Y \subset \partial S$, then
this example can be easily adopted to the weighted spaces.

Clearly, $\nu_{{\mathfrak D}^{(3)}}$ is responsible not for the stress/viscosity on the boundary
but for a more large class of interactions with $\partial D$. For instance, interpreting
the Lam\'e system as a linearization of the stationary version of the Navier-Stokes' type
equations for the compressible fluids, we see that the boundary operator $(\nu_{{\mathfrak
D}^{(3)}} + b_{0,0})$ reflects rather the vorticity and the source density on conormal directions
to  $\partial D \setminus S$. This means that the boundary operator $\nu_{{\mathfrak D}^{(3)}}$
is more fit to study problems, related  to models with the turbulent flows, than the operators
$\nu_{{\mathfrak D}^{(1)}}$ and $\nu_{{\mathfrak D}^{(2)}}$. Then it is natural that the class of
the possible solutions to (\ref{eq.SL.w}) extends up to $H^{+,\gamma} _{{\mathfrak D}^{(3)}} (D)$
due to the loss of the regularity of solutions near $\partial D\setminus S$.}
\end{example}

\begin{example}  \label{ex.d2} {\rm
Let $\rho \equiv 1$. Consider mixed problem (\ref{eq.SL.w}) for $A= ({\mathfrak D}^{(2)})^*
{\mathfrak D}^{(2)}$ and $B=\chi_S +  \chi_{\partial D\setminus S} (\nu_{{\mathfrak D}^{(2)}}  +
h \mu (x)\partial _{\tau_0}) $ with a small parameter $h$,  see  (\ref{eq.Lame}),
(\ref{eq.stress.tensor}),  in the case where $\mu \geq \kappa>0$,  $\mu+\lambda \geq \kappa>0$.
In particular, if we choose the vectors (\ref{eq.basis}) as a basis among the tangential vectors
to $\partial D$ then $d_j (x) = h  \chi_{\partial D\setminus S}   \mu (x) I_m$, $1\leq j
\leq m-1$.

Assume that $\mu , \lambda\in C^{0,1} (\overline D)$. Then  $\mu \in L^{\infty} (D)$,
$\nabla_m \mu \in L^{\infty} (D)$, $\mu \in C^{0,\alpha} (\partial D\setminus S)$ for all
$1/2 < \alpha \leq 1$. According to Lemma \ref{l.factor.1}, the norms of the spaces
$H^{+,\gamma}_{{\mathfrak  D}^{(2)}} (D)$ and $[ H^{1} (D)]^m$ are equivalent.  Lemma
\ref{l.bound.comp.1} implies that the first order terms induce the compact operators,
acting from  $[H^{1} (D)]^m$ to $[H^{-1} (D)]^m$. If the value $|h|$ is sufficiently small
then problem (\ref{eq.SL.w}) is a Fredholm one and its root vectors are dense in $[ H^{1}
(D)]^m$, $[ H^{-1} (D)]^m$, $[ L^{2} (D)]^m$. If the value $|h|$,
is sufficiently small then problem (\ref{eq.SL.w}) is uniquely solvable.
The other acceptable perturbations are described in Theorem \ref{t.solv.SL1}.

If the coefficients $\lambda, \mu$ are constants then Gau\ss-Ostrogradskii formula implies
\[
|( \partial_{\tau_0} u, v)_{L^{2} (\partial D\setminus S)|} = \left|  \sum_{j=1}^m(\nabla _m
u_j, \partial_j v)_{L^2 (D)} -  (\mbox{div}_m \ u, \mbox{div}_m \ v)_{L^2 (D)} \right|
\mbox{ for all } u,v \in H^1 (D,S),
\]
i.e. $\mu \|\partial _{\tau_0}\|\leq 1$. But it follows from (\ref{eq.stress.var}) that
$ \nu_{{\mathfrak D}^{(3)}} + b_{0,0} = \nu_{{\mathfrak D}^{(2)}} + b_{0,0}- \mu
\partial_{\tau_0}$. Thus, if $\mu \|\partial_{\tau_{0}}\|<1$ then, for  matrices $b_{0,0}$ with
rather small entries, the mixed problem with the boundary operator $\nu_{{\mathfrak D}^{(3)}}
+ b_{0,0}$ can be interpreted as a small perturbation of the mixed problem with the boundary
operator $\nu_{{\mathfrak D}^{(2)}}$. However this contradicts with Example \ref{ex.d2}, because
the space $H^{+,\gamma}_{{\mathfrak D}^{(3)}} (D)$  is not embedded into
$H^{+,\gamma}_{{\mathfrak D}^{(2)}} (D)=[H^{1,\gamma} (D)]^m$. Hence for constant Lam\'e
coefficients the perturbation method is valid with $|h|<1$. In particular, formula
(\ref{eq.stress.var}) means that the mixed problem with
the boundary operator  $\chi_S + \chi_{\partial D\setminus S} \nu_{{\mathfrak D}^{(2)}}  $
can not be investigated as the perturbation  of the mixed problem with the boundary
operator $\chi_S + \chi_{\partial D\setminus S} \sigma $ in the space $[H^{1,\gamma} (D)]^m$.
}
\end{example}

In conclusion, we give examples of proper weight-functions.

\begin{example} Consider the cylinder $D=\{  (x_1, \dots, x_{m-1}) \in \Omega , 0<x_m<1\}$
with the base  $S=\{(x_1, \dots, x_{m-1}) \in \Omega, \, x_m=0\}$ and the set
$Y=\partial S$, where $ \Omega$ is domain with smooth boundary in ${\mathbb R}^{m-1}$.
Let $\phi (x_1, \dots, x_{m-1})$ be the defining function for the domain $ \Omega$, i.e.
it is a real-valued function with $\nabla \phi =1$ on $\partial \Omega$ such that
$\Omega = \{ (x_1, \dots, x_{m-1}) \in {\mathbb R}^{m-1}: \, \phi (x_1, \dots, x_{m-1})<0 \}$.
Then $\rho (x) = \sqrt{\phi^2 (x_1, \dots, x_{m-1}) + x_m^2 }$.
\end{example}

\begin{example} Consider the cube $D=\{ -1 <x_j<1, 1\leq j \leq m-1, 0<x_m<1\}$ with a
distinguished side $S=\{-1 <x_1<1, -1 <x_2<1, -1 <x_j<1, 1\leq j \leq m-1, x_m=0\}$
and the set $Y=\partial S$. In this situation we may set
\[
\rho (x) = \Big( \Pi _{j=1}^{m-1} ((x_j-1)^2  + x_m^2) \, ((x_j+1)^2 + x_m^2) \Big)^{1/2}.
\]
\end{example}

\medskip

\emph{The work was supported by RFBR grants 14-01-00544 and 14-01-00081. 
}


\def\bstname{name}

\end{document}